\newcommand{\real}{\mathbb{R}}
\newcommand{\ind}{\mathds{1}}
\newcommand{\entropy}{\mathbf H}
\newcommand{\eqd}{\overset{d}{=}}
\newcommand{\abs}[1]{\left\lvert#1\right\rvert}
\newtheorem{theorem}{Theorem}[section]
\newtheorem{lemma}[theorem]{Lemma}
\theoremstyle{remark}
\newtheorem{remark}[theorem]{Remark}
\theoremstyle{definition}
\newtheorem{definition}[theorem]{Definition}
\numberwithin{equation}{section}
\begin{document}
\title[Entropies of CIR and Bessel processes]{Entropies of Cox--Ingersoll--Ross and Bessel processes as~functions of time and of related parameters}
\author[Ivan Kucha, Yuliya Mishura, and Kostiantyn Ralchenko]{Ivan Kucha$^1$, Yuliya Mishura$^{1}$, and Kostiantyn Ralchenko$^{1,2}$}
\address{$^1$Department of Probability, Statistics and Actuarial Mathematics, Taras Shevchenko National University of Kyiv,
Volodymyrska 64, 01601 Kyiv, Ukraine}
\address{$^2$School of Technology and Innovations, University of Vaasa,
P.O. Box 700, Vaasa, FIN-65101, Finland}
\thanks{Y.M. is supported by The Swedish Foundation for Strategic Research, grant UKR24-0004, and by the Japan Science and Technology Agency CREST, project reference number JPMJCR2115.}
\thanks{K.R. is supported by the Research Council of Finland, decision
number 367468.}
\email{ivan.kucha@knu.ua, yuliyamishura@knu.ua, kostiantynralchenko@knu.ua}
\begin{abstract}
We investigate the long-time asymptotic behavior of various entropy measures associated with the Cox--Ingersoll--Ross (CIR) and squared Bessel processes. As the one-dimensional distributions of both processes follow noncentral chi-squared laws, we first derive sufficient conditions for the existence of these entropy measures for a noncentral chi-squared random variable. We then analyze their limiting behavior as the noncentrality parameter approaches zero and apply these results to the Cox--Ingersoll--Ross and squared Bessel processes.
We prove that, as time tends to infinity, the entropies of the CIR process converge to those of its stationary distribution, while for the squared Bessel process, the Shannon, Rényi, and generalized Rényi entropies diverge, however, the Tsallis and Sharma–Mittal entropies may diverge or remain finite depending on the entropy parameters. Finally, we demonstrate that, as the CIR process converges to the squared Bessel process, the corresponding entropies also converge.
\end{abstract}

\keywords{Cox--Ingersoll--Ross process,
squared Bessel process,
noncentral chi-squared distribution,
entropy of stochastic processes,
Shannon entropy,
R\'enyi entropy,
Tsallis entropy,
Sharma--Mittal entropy}
\subjclass{94A17, 60E05, 60H10, 60J60}
\date{\today}

\maketitle

\section{Introduction}

The entropy of a probability distribution, introduced by Claude Shannon, has very peculiar properties. For example, the entropy of a continuous probability distribution is not, generally speaking, the limit of the entropies of discrete distributions converging to it. 
It is all the more interesting to study, so to speak, the ``positive'' properties of Shannon entropy and other entropies introduced later, in particular, Rényi, generalized Rényi, Tsallis, and Sharma--Mittal entropies. 
In particular, it is interesting to study the properties of these entropies with respect to the parameters of the underlying distributions. Some attempts in this direction are presented in papers \cite{bodnarchuk2024properties, BraimanEtAl_NAMC2024, MalyarenkoEtAl_Axioms2023, MishuraEtAl_VMSTA2024, nadarajah2003renyi, nishiyama2020relations}. 

It is even more interesting to study the properties of the entropies of stochastic processes, as is done, e.g., in \cite{gomez2008entropy,Cattiaux23,follmer85,Cincotta23,orlando2023expecting, degregorio2009renyi,Zhang2025} (we do not pretend to be an exhaustive list, as there are many works  on this topic). In this case, of course, the question arises of how to most adequately represent the entropy of a stochastic process. A natural answer is to study the entropy of finite-dimensional distributions. 

For Gaussian vectors and processes, Shannon's entropy is discussed in detail in the book~\cite{stratonovich}. But even in this relatively simple case, Shannon's entropy, which is expressed through the logarithm of the determinant of the covariance matrix, has rather complex and difficult-to-verify analytical properties, which is clearly seen in the example of such a widespread process as fractional Brownian motion (\cite{MMRS2023}). 
Therefore, when considering more complicated fractional Gaussian processes, in the paper \cite{MalyarenkoEtAl_Axioms2023} we limited ourselves to the entropies of one-dimensional distributions. The situation is even more complicated with the entropies of stochastic processes that have non-Gaussian distributions. 

However, even in these cases, it is possible to investigate the behavior of entropies both as functions of time and as functions of the process's parameters. The present paper is devoted to the study of such problems.
More precisely, we consider two closely related stochastic processes: the Cox--Ingersoll--Ross (CIR) process and the squared Bessel process. Both are strictly positive solutions of stochastic differential equations, making them well-suited for modeling real-world phenomena in physics, biology, and economics. In financial mathematics, the CIR process is widely used for modeling interest rate dynamics and in bond pricing frameworks; see, for example, \cite{Brigo,DiFrancesco22,Orlando19}. Notably, the squared Bessel process can, in a certain sense, be viewed as a limiting case of the CIR process under a phase transition \cite{MRK25}.

Our main interest lies in the asymptotic behavior of entropy measures for these processes as time $t\to\infty$. We show that, for the CIR process, all entropies converge to those of the corresponding stationary distribution, which is a gamma distribution. These limiting values are finite and can be computed explicitly. In contrast, for the squared Bessel process, the Shannon, Rényi, and generalized Rényi entropies diverge, whereas the Tsallis and Sharma–Mittal entropies may either diverge or converge to a finite limit, depending on the values of the entropy parameters.

It is known that the squared Bessel process arises as a limiting case of the Cox--Ingersoll--Ross process when the mean-reversion parameter $b$ tends to zero. Recent results in \cite[Section~IV]{MRK25} study this convergence for every fixed $t > 0$ in the $L^1$- and $L^2$-norms, and establish the corresponding rates of convergence. In the present paper, we extend the analysis by investigating the convergence of the corresponding entropy measures. Specifically, we show that as $b \to 0$, the entropies of the CIR process converge to those of the squared Bessel process.

Since the one-dimensional distributions of both processes follow scaled noncentral chi-squared laws, we first derive sufficient conditions for the existence of entropy measures for a noncentral chi-squared random variable. We also analyze their limiting behavior as the noncentrality parameter tends to zero.

The paper is organized as follows.
Section \ref{sec:preliminaries} provides a review of the central and noncentral chi-squared distributions and the relationships between them. This section also introduces six entropies (Shannon, R\'enyi, two generalized R\'enyi, Tsallis, and Sharma--Mittal) and examines the effect of scaling on their values.
In Section~\ref{sec:conv-shi-sq}, we establish the existence and convergence of the entropies of the noncentral chi-squared distribution as the noncentrality parameter approaches zero.
These results are then applied in Section \ref{sec:cir-bessel} to the entropies of the CIR and squared Bessel processes; this section presents our main findings.
Appendix contains auxiliary results concerning the modified Bessel function of the first kind, which appears in the probability density function of the noncentral chi-squared distribution, as well as properties of the density itself. In particular, we derive several useful inequalities and convergence results for integrals that arise in the definitions of the entropies.

\section{Preliminaries} \label{sec:preliminaries}

We begin by fixing the notation and recalling the distributions and entropies under study. 

\subsection{Central and noncentral chi-squared distributions}

\begin{definition}[Central chi-squared distribution]
Let $k>0$. The (central) \emph{chi-squared distribution} with $k$ degrees of freedom is an absolutely continuous probability distribution whose probability density function (PDF) is given by
\begin{equation}\label{pdf-central}
    f_{X_k}(x) = \frac{1}{2^{k/2} \Gamma(k/2)} \, x^{k/2 - 1} e^{-x/2} \ind_{x > 0},
\end{equation}
where $\Gamma(x)$ denotes the gamma function.
\end{definition}
This distribution arises as the distribution of the sum of squares of $k$ independent standard normal random variables when $k$ is a positive integer, and more generally as a special case of the gamma distribution.
The following definition generalizes the central chi-squared law to account for a nonzero mean in the underlying normal random variables. 

\begin{definition}[Noncentral chi-squared distribution] \label{def:noncentralchisquared}
Let $k,\lambda>0$. The \emph{noncentral chi-squared distribution} with $k$ degrees of freedom and noncentrality parameter $\lambda$ is an absolutely continuous probability distribution whose PDF is given by
\begin{equation}\label{pdf-noncentral}
    f_{X_{k, \lambda}}(x) = \frac{1}{2} e^{-(x + \lambda)/2} \left(\frac{x}{\lambda}\right)^{k/4 - 1/2} I_{k/2 - 1}\bigl(\sqrt{\lambda x}\,\bigr)\ind_{x > 0},
\end{equation}
where $I_{\nu}(x)$ denotes the modified Bessel function of the first kind of order $\nu$, see Appendix~\ref{app:inequality}.
\end{definition}

For a comprehensive treatment of central and noncentral chi-squared distributions, we refer the reader to \cite[Chapter 18]{JKB-v1} and \cite[Chapter 29]{JKB-v2}, respectively. 

The PDF of the noncentral chi-squared distribution can be expressed as a Poisson mixture of central chi-squared PDFs \cite[formula (29.4)]{JKB-v2}:
\begin{equation}\label{mixure}
f_{X_{k,\lambda}}(x) = e^{-\lambda/2} \sum_{r=0}^\infty \frac{1}{r!} \left( \frac{\lambda}{2} \right)^r f_{X_{k + 2r}}(x),
\quad x > 0.
\end{equation}

It is known \cite[Chapter 29, Section 10]{JKB-v2} that when $\lambda = 0$, the noncentral chi-squared distribution reduces to the central chi-squared distribution. Specifically, for any $x > 0$ and $k > 0$,
\begin{equation} \label{chi_squared_convergence}
f_{X_{k,\lambda}}(x) \to f_{X_k}(x)
\quad \text{as } \lambda \to 0.
\end{equation}
This convergence follows directly from the mixture representation \eqref{mixure} by letting \mbox{$\lambda \to 0$}.
Alternatively, \eqref{chi_squared_convergence} can be deduced directly from the formulas for the central and noncentral PDFs by applying the asymptotic expansion
\[
I_{\nu}(x) = \frac{1}{\Gamma(\nu + 1)} \left(\frac{x}{2}\right)^\nu + o(x^\nu), \quad x \to 0,
\]
which follows from the series representation \eqref{bessel-func} of the modified Bessel function $I_\nu(x)$.

\subsection{Entropy functionals}

\begin{definition}
\label{def:entropies}
Let $X$ be a random variable and $f_X(x)$, $x \in \real$, be its PDF.
    \begin{enumerate}
    \item The \emph{Shannon entropy} is given by
    \[
    \entropy_S (X) = - \int_{\mathbb{R}} f_X(x) \log f_X(x) \, dx.
    \]
    
    \item The \emph{R\'enyi entropy} with parameter $\alpha > 0$, $\alpha \ne 1$, is given by
    \[
    \entropy_R(X;\alpha) = \frac{1}{1-\alpha} \log \int_{\mathbb{R}} f_X^\alpha(x) \, dx.
    \]

    \item The \emph{generalized R\'enyi entropy} in the case $\alpha \neq \beta$, $\alpha, \beta > 0$ is given by
    \[
    \entropy_{GR}(X;\alpha, \beta) = \frac{1}{\beta - \alpha} \log \frac{\int_{\mathbb{R}} f_X^\alpha(x) \, dx}{\int_{\mathbb{R}} f_X^\beta(x) \, dx}.
    \]
    The \emph{generalized R\'enyi entropy} in the case $\alpha = \beta > 0$ is given by
    \[
    \entropy_{GR}(X;\alpha) = - \frac{\int_{\mathbb{R}} f_X^\alpha(x) \log f_X(x) \, dx}{\int_{\mathbb{R}} f_X^\alpha(x) \, dx}.
    \]

    \item The \emph{Tsallis entropy} with index $\alpha > 0$, $\alpha \neq 1$ is given by
    \[
    \entropy_T(X;\alpha) = \frac{1}{1-\alpha} \left( \int_{\mathbb{R}} f_X^\alpha(x) \, dx - 1 \right).
    \]

    \item The \emph{Sharma--Mittal entropy} with positive indices $\alpha \neq 1$ and $\beta \neq 1$ is defined as
    \[
    \entropy_{SM}(X;\alpha, \beta) = \frac{1}{1-\beta} \left( \left( \int_{\mathbb{R}} f_X^\alpha(x) \, dx \right)^{\frac{1-\beta}{1-\alpha}} - 1 \right).
    \]
\end{enumerate}
\end{definition}

\begin{remark}
The formulas for the entropies of the central chi-squared distribution are derived in \cite[Proposition 3.6]{bodnarchuk2024properties}.
\end{remark}

In what follows, we consider the entropies of one-dimensional distributions arising from the CIR and Bessel processes. Each of these distributions can be represented as the law of a noncentral chi-squared random variable multiplied by a deterministic scaling factor. Consequently, it is necessary to relate the entropies of a scaled random variable $C\cdot X$ for some $ C>0$ to those of the original variable $X$. The following straightforward lemma provides the corresponding transformation rules.

\begin{lemma}[Scaling of entropy functionals]
\label{l:entropy-scaling}
Let $X$ be a random variable with probability density function $f_X$, and let $C > 0$ be a constant. Then the entropies of $C\cdot X$ are expressed via those of $X$ as follows:
\begin{align*}
\entropy_S(C\cdot X) &= \entropy_S(X) + \log C, \\
\entropy_R(C\cdot X;\alpha) &= \entropy_R(X;\alpha) + \log C
&& (\alpha > 0,\, \alpha \ne 1), \\
\entropy_{GR}(C\cdot X;\alpha,\beta) &= \entropy_{GR}(X;\alpha,\beta) + \log C
&& (\alpha, \beta > 0,\, \alpha \ne \beta),\\
\entropy_{GR}(C\cdot X;\alpha) &= \entropy_{GR}(X;\alpha) + \log C
&& (\alpha > 0),\\
\entropy_T(C\cdot X;\alpha) &= C^{1 - \alpha} \entropy_T(X;\alpha) + \dfrac{C^{1 - \alpha} - 1}{1 - \alpha}
&& (\alpha > 0,\, \alpha \ne 1),\\
\entropy_{SM}(C\cdot X;\alpha,\beta) &= C^{1 - \beta} \entropy_{SM}(X;\alpha,\beta) + \dfrac{C^{1 - \beta} - 1}{1 - \beta}
&& (\alpha, \beta > 0,\, \alpha \ne 1,\, \beta \ne 1).
\end{align*}
\end{lemma}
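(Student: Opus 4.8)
The plan is to reduce all six cases to a single elementary change of variables. Writing $Y = C\cdot X$ with $C>0$, the density of $Y$ is $f_Y(y) = \frac{1}{C}\,f_X(y/C)$, and after substituting $x = y/C$ (so that $dy = C\,dx$) the central quantity becomes the power integral
\[
\int_{\real} f_Y^\alpha(y)\,dy = C^{-\alpha}\int_{\real} f_X(y/C)^\alpha\,dy = C^{1-\alpha}\int_{\real} f_X^\alpha(x)\,dx,
\]
valid for every $\alpha > 0$ (with $\alpha = 1$ recovering the normalization $\int f_Y = \int f_X = 1$). Denoting $M_\alpha(X) := \int_{\real} f_X^\alpha(x)\,dx$, the scaling identity $M_\alpha(Y) = C^{1-\alpha} M_\alpha(X)$ is the single fact from which all six formulas will follow.

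First I would dispatch the three logarithmic entropies. For the R\'enyi entropy, inserting $M_\alpha(Y) = C^{1-\alpha} M_\alpha(X)$ and using $\log C^{1-\alpha} = (1-\alpha)\log C$ produces the additive shift $\log C$ after dividing by $1-\alpha$; the generalized R\'enyi entropy with $\alpha \ne \beta$ is handled identically, since the prefactor ratio $C^{1-\alpha}/C^{1-\beta} = C^{\beta-\alpha}$ contributes $\log C$ after dividing by $\beta-\alpha$. For the Shannon entropy and for the generalized R\'enyi entropy at $\alpha = \beta$, the only additional ingredient is the companion identity
\[
\int_{\real} f_Y^\alpha(y)\log f_Y(y)\,dy = C^{1-\alpha}\left(\int_{\real} f_X^\alpha(x)\log f_X(x)\,dx - (\log C)\,M_\alpha(X)\right),
\]
obtained by the same substitution together with $\log f_Y(y) = \log f_X(y/C) - \log C$. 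Taking $\alpha = 1$ yields $\entropy_S(Y) = \entropy_S(X) + \log C$, while for general $\alpha$ dividing this identity by $M_\alpha(Y) = C^{1-\alpha} M_\alpha(X)$ cancels the $C^{1-\alpha}$ factor and leaves $\entropy_{GR}(Y;\alpha) = \entropy_{GR}(X;\alpha) + \log C$.

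Next I would treat the two power-type entropies, where the transformation is affine rather than additive. For the Tsallis entropy, substituting $M_\alpha(Y) = C^{1-\alpha} M_\alpha(X)$ into $\frac{1}{1-\alpha}(M_\alpha(Y) - 1)$ and rewriting $C^{1-\alpha} M_\alpha(X) - 1 = C^{1-\alpha}(M_\alpha(X) - 1) + (C^{1-\alpha} - 1)$ reproduces the stated formula after dividing by $1-\alpha$. For the Sharma--Mittal entropy the only new point is the exponent bookkeeping: raising $M_\alpha(Y) = C^{1-\alpha} M_\alpha(X)$ to the power $\frac{1-\beta}{1-\alpha}$ converts the prefactor to $C^{1-\beta}$, after which the same add-and-subtract algebra as in the Tsallis case yields the claimed affine relation.

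There is no genuine obstacle here: the result is mechanical once the density transformation rule is recorded. The only points requiring attention are the exponent arithmetic in the Sharma--Mittal case (confirming $(1-\alpha)\cdot\frac{1-\beta}{1-\alpha} = 1-\beta$, which is what forces the hypothesis $\alpha \ne 1$), and the legitimacy of the substitution, which is ensured because all integrals appearing in Definition~\ref{def:entropies} are assumed finite for the distributions under consideration.
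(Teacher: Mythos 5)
Your proof is correct and follows essentially the same route as the paper: the density transformation $f_{C\cdot X}(y) = \frac{1}{C} f_X(y/C)$, the change of variables $y = Cx$, and the splitting $\log f_{C\cdot X}(y) = \log f_X(x) - \log C$, with the power-integral scaling $\int f_{C\cdot X}^\alpha = C^{1-\alpha}\int f_X^\alpha$ driving all six cases. Your write-up is in fact more detailed than the paper's sketch, and the exponent check $(1-\alpha)\cdot\frac{1-\beta}{1-\alpha} = 1-\beta$ for the Sharma--Mittal case is exactly the bookkeeping the paper leaves implicit.
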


\begin{proof}
Let $f_X$ denote the density of $X$. Then the density of $Y =C\cdot X$ is given by
\[
f_Y(y) = \frac{1}{C} f_X\left(\frac{y}{C}\right).
\]
Substitute this expression into each entropy definition and change variables via $y = Cx$, $dy = C\,dx$. In the case of Shannon, Rényi, and both generalized Rényi entropies, the logarithmic term splits as $\log f_Y(y) = \log f_X(x) - \log C$, yielding an additive $\log C$ term. In the Tsallis and Sharma--Mittal entropies, the prefactor $C^{1 - \alpha}$ or $C^{1 - \beta}$ appears due to the power of the density and scaling of the integral, which results in the corresponding multiplicative and correction terms.
\end{proof}

\section{Convergence of the entropies of the noncentral chi-squared distribution as~noncentrality parameter tends to zero}
\label{sec:conv-shi-sq}

In this section, we study the limiting behavior of various entropies associated with the noncentral chi-squared distribution as the noncentrality parameter tends to zero. We provide sufficient conditions for the existence and convergence of these entropy measures.

\begin{theorem} \label{thm:EntropyConvergence}
Let $X_{k, \lambda}$ denote a noncentral chi-squared random variable with $k > 1$ degrees of freedom and noncentrality parameter $\lambda > 0$, and let $X_k$ denote the corresponding central chi-squared distribution with $k > 1$ degrees of freedom. Then the following statements hold:
\begin{enumerate}
\item
The Shannon entropy of $X_{k, \lambda}$ exists for all $\lambda > 0$ and $k > 1$, and converges to the Shannon entropy of $X_k$ as $\lambda \to 0$:
\[
\lim_{\lambda \to 0} \entropy_S(X_{k,\lambda}) = \entropy_S(X_k).
\]

\item
The Rényi entropy of $X_{k,\lambda}$ with parameter $\alpha > 0$, $\alpha \ne 1$, exists if and only if 
$k > 2 - \frac{2}{\alpha}$. For such values of $k$ and $\alpha$, the following limit holds:
\[
\lim_{\lambda \to 0} \entropy_R(X_{k,\lambda};\alpha) = \entropy_R(X_k;\alpha).
\]

\item
The generalized Rényi entropy of $X_{k, \lambda}$ with parameters $\alpha \ne \beta$ and $\alpha, \beta > 0$ exists if and only if
$k > 2 - \frac{2}{\alpha \vee \beta}$.
For all such values of $k$, $\alpha$, and $\beta$, the limit
\[
\lim_{\lambda \to 0} \entropy_{GR}(X_{k,\lambda};\alpha,\beta) = \entropy_{GR}(X_k;\alpha,\beta)
\]
holds.

If $\alpha = \beta > 0$, then the generalized Rényi entropy of $X_{k,\lambda}$ exists for all $k > 2 - \frac{2}{\alpha}$. In this case,
\[
\lim_{\lambda \to 0} \entropy_{GR}(X_{k,\lambda};\alpha) = \entropy_{GR}(X_k;\alpha).
\]

\item 
The Tsallis entropy of $X_{k,\lambda}$ with index $\alpha > 0$, $\alpha \ne 1$, exists if and only if $k > 2 - \frac{2}{\alpha}$. For such $k$ and $\alpha$, the following limit holds:
\[
\lim_{\lambda \to 0} \entropy_T(X_{k,\lambda};\alpha) = \entropy_T(X_k;\alpha).
\]

\item 
The Sharma--Mittal entropy of $X_{k, \lambda}$ with indices $\alpha \ne 1$, $\beta \ne 1$, and $\alpha, \beta > 0$, exists if and only if $k > 2 - \frac{2}{\alpha}$. For such $k$ and $\alpha$,
\[
\lim_{\lambda \to 0} \entropy_{SM}(X_{k,\lambda}; \alpha, \beta) = \entropy_{SM}(X_k;\alpha, \beta).
\]
\end{enumerate}
\end{theorem}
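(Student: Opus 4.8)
The plan is to reduce all six functionals to three scalar integrals and then prove existence and convergence for those. Writing $\nu=k/2-1$ and
\[
M_\alpha(\lambda)=\int_0^\infty f_{X_{k,\lambda}}^\alpha(x)\,dx,\quad L(\lambda)=\int_0^\infty f_{X_{k,\lambda}}(x)\log f_{X_{k,\lambda}}(x)\,dx,\quad L_\alpha(\lambda)=\int_0^\infty f_{X_{k,\lambda}}^\alpha(x)\log f_{X_{k,\lambda}}(x)\,dx,
\]
each entropy in Definition~\ref{def:entropies} is an explicit elementary function of these: $\entropy_S=-L$, $\entropy_R=\frac{1}{1-\alpha}\log M_\alpha$, $\entropy_{GR}=\frac{1}{\beta-\alpha}\log(M_\alpha/M_\beta)$ or $-L_\alpha/M_\alpha$, $\entropy_T=\frac{1}{1-\alpha}(M_\alpha-1)$, and $\entropy_{SM}=\frac{1}{1-\beta}(M_\alpha^{(1-\beta)/(1-\alpha)}-1)$. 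Since $M_\alpha(\lambda)>0$ whenever it is finite, all these maps are continuous in their arguments, so it suffices to prove that $M_\alpha,L,L_\alpha$ are finite under the stated conditions and converge as $\lambda\to0$ to the corresponding integrals $M_\alpha(0),L(0),L_\alpha(0)$ for the central law $X_k$, which are finite under the same conditions (cf.\ the explicit formulas in \cite[Proposition 3.6]{bodnarchuk2024properties}).

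For existence I would first establish two-sided bounds on the density. By nonnegativity of the series \eqref{bessel-func}, its leading term gives the clean lower bound $I_\nu(z)\ge (z/2)^\nu/\Gamma(\nu+1)$, while the Appendix supplies an upper bound of exponential type; substituting $z=\sqrt{\lambda x}$ into \eqref{pdf-noncentral} and using the identity $(x/\lambda)^{k/4-1/2}(\sqrt{\lambda x})^{k/2-1}=x^{k/2-1}$, which cancels every power of $\lambda$, one obtains constants $c_1,c_2,c>0$ with
\[
c_1\,x^{k/2-1}e^{-x/2}\le f_{X_{k,\lambda}}(x)\le c_2\,x^{k/2-1}e^{-cx}\qquad(x>0,\ \lambda\in(0,\lambda_0]).
\]
Both sides behave like $x^{k/2-1}$ at the origin and decay exponentially at infinity, so $f_{X_{k,\lambda}}^\alpha$ is integrable near $0$ if and only if $\alpha(k/2-1)>-1$, that is $k>2-\tfrac{2}{\alpha}$; the upper bound gives sufficiency and the lower bound necessity. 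This settles the criterion for $M_\alpha$, hence for the Rényi, Tsallis, and Sharma--Mittal entropies, while the generalized Rényi entropy with $\alpha\neq\beta$ requires finiteness of both $M_\alpha$ and $M_\beta$, which amounts to $k>2-\tfrac{2}{\alpha\vee\beta}$. For the logarithmic functionals the two-sided bound yields $\abs{\log f_{X_{k,\lambda}}(x)}\le C_1\abs{\log x}+C_2 x+C_3$ uniformly in $\lambda\in(0,\lambda_0]$, whence the integrands of $L$ and $L_\alpha$ are dominated by $x^{\alpha(k/2-1)}(C_1\abs{\log x}+C_2x+C_3)e^{-\alpha cx}$, integrable under the same condition (with $\alpha=1$ for Shannon, giving merely $k>0$, subsumed by $k>1$).

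Convergence then follows by dominated convergence. For $\lambda\in(0,\lambda_0]$ the upper estimate provides a single dominating function $g(x)=c_2\,x^{k/2-1}e^{-cx}$ independent of $\lambda$, and $g(x)(C_1\abs{\log x}+C_2x+C_3)$ dominates the logarithmic integrands; the displayed integrability shows these majorants lie in $L^1(0,\infty)$ exactly when the existence condition holds. By \eqref{chi_squared_convergence} we have $f_{X_{k,\lambda}}(x)\to f_{X_k}(x)>0$ for every fixed $x>0$, so $f_{X_{k,\lambda}}^\alpha\to f_{X_k}^\alpha$ and $\log f_{X_{k,\lambda}}\to\log f_{X_k}$ pointwise, and the integrands of $M_\alpha$, $L$, $L_\alpha$ converge pointwise to their central counterparts. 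Dominated convergence gives $M_\alpha(\lambda)\to M_\alpha(0)$, $L(\lambda)\to L(0)$, and $L_\alpha(\lambda)\to L_\alpha(0)$, and substituting into the continuous formulas of the first paragraph yields the convergence of all six entropies.

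The main obstacle is the logarithmic functionals $L$ and $L_\alpha$, whose integrands are sign-indefinite and unbounded (since $\log f\to-\infty$ both as $x\to0$ and as $x\to\infty$): controlling $\abs{\log f_{X_{k,\lambda}}}$ needs the \emph{two-sided} density estimate, not merely a one-sided majorant, and the delicate point is that these bounds must hold uniformly for $\lambda$ in a neighbourhood of $0$ so that the dominating function does not degenerate as $\lambda\to0$. This uniformity, together with the integrability of $x^{\alpha(k/2-1)}\abs{\log x}$ near the origin (valid precisely when $k>2-\tfrac{2}{\alpha}$), is exactly what the Bessel-function inequalities and the integral-convergence lemmas of Appendix~\ref{app:inequality} are designed to provide; once they are in hand, the remaining steps are routine.
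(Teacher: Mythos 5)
Your proposal is correct and follows essentially the same route as the paper's proof: both reduce all six entropies to the three integrals $\int f^\alpha\,dx$, $\int f^\beta\,dx$, $\int f^\alpha\log f\,dx$, establish two-sided bounds $c_1 x^{k/2-1}e^{-x/2}\le f_{X_{k,\lambda}}(x)\le c_2\,x^{k/2-1}e^{-cx}$ that eliminate the Bessel function (the paper via Luke's inequality, you via the leading term of the series for the lower bound), and then obtain existence from gamma-type integral criteria and convergence by dominated convergence with a majorant uniform in $\lambda$ near $0$. The paper merely packages these steps as Lemmas~\ref{lemma:noncentralpdfbounds} and~\ref{l:integrals}, so the difference is organizational, not mathematical.
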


\begin{proof}
Let $f_{X_k}$ and $f_{X_{k,\lambda}}$ denote the PDFs of the central and noncentral chi-squared distributions, as defined in \eqref{pdf-central} and \eqref{pdf-noncentral}, respectively.

(1) The result for the Shannon entropy follows directly from the definition of $\entropy_S(X_{k,\lambda})$ and Lemma~\ref{l:integrals}~$(ii)$. 

(2)--(6) Similarly, by Definition~\ref{def:entropies}, all other entropy expressions depend continuously on the following integrals:
\[
\int_{\mathbb{R}} f_{X_{k,\lambda}}^\alpha(x) \, dx, \quad
\int_{\mathbb{R}} f_{X_{k,\lambda}}^\beta(x) \, dx, \quad
\int_{\mathbb{R}} f_{X_{k,\lambda}}^\alpha(x) \log f_{X_{k,\lambda}}(x) \, dx.
\]
Under the stated necessary and sufficient conditions, these integrals exist and converge to the corresponding integrals involving $f_{X_k}$ as $\lambda \to 0$, by Lemma~\ref{l:integrals}, whence the proof follows.
\end{proof}

\begin{remark}
The condition $k > 1$ in Theorem~\ref{thm:EntropyConvergence} is necessary for the application of Lemma~\ref{l:integrals}.  
Moreover, since $k > 1$, the additional condition $k > 2 - \frac{2}{\alpha}$ is only restrictive when $\alpha > 2$.
\end{remark}

\begin{remark}\label{rem:conv-chi-sq}
Let $X_{k,\lambda}$ and $X_{k,\lambda_0}$ denote noncentral chi-squared random variables with $k$ degrees of freedom and noncentrality parameters $\lambda$ and $\lambda_0$, respectively. Analogously to Theorem~\ref{thm:EntropyConvergence}, one can show that as $\lambda \to \lambda_0$, the entropies
\[
\entropy_S(X_{k,\lambda}),\;
\entropy_R(X_{k,\lambda}; \alpha),\;
\entropy_{GR}(X_{k,\lambda}; \alpha, \beta),\;
\entropy_{GR}(X_{k,\lambda}; \alpha),\;
\entropy_T(X_{k,\lambda}; \alpha),\;
\entropy_{SM}(X_{k,\lambda}; \alpha, \beta)
\]
converge to
\[
\entropy_S(X_{k,\lambda_0}),\;
\entropy_R(X_{k,\lambda_0}; \alpha),\;
\entropy_{GR}(X_{k,\lambda_0}; \alpha, \beta),\;
\entropy_{GR}(X_{k,\lambda_0}; \alpha),\;
\entropy_T(X_{k,\lambda_0}; \alpha),\;
\entropy_{SM}(X_{k,\lambda_0}; \alpha, \beta),
\]
respectively, under the same conditions on $k$, $\alpha$, and $\beta$ as specified in Theorem~\ref{thm:EntropyConvergence}.
\end{remark}


\section{Asymptotic behavior of entropies of Cox--Ingersoll--Ross and squared Bessel processes}
\label{sec:cir-bessel}

In this section, we apply the results of Section~\ref{sec:conv-shi-sq} to study the asymptotic behavior of the entropies associated with the one-dimensional distributions of two related stochastic processes: the Cox--Ingersoll--Ross (CIR) process and the squared Bessel process. An overview of these processes, along with their basic properties and relevant references, is provided in the recent paper \cite{MRK25}. For further details, we refer to the monograph \cite{RevuzYor}, the classical papers \cite{GJY03,PitmanYor}, and more recent contributions \cite{MiYu2, MIYu1}; see also \cite{BAK2013} for a comparative analysis of the two processes.  
We examine the asymptotic behavior of each process as $t \to \infty$, and also establish the convergence of the CIR process to the squared Bessel process in the limiting regime where the mean-reversion parameter $b \downarrow 0$.

\subsection{Asymptotic behavior of the entropies of the CIR process as $t \to \infty$}
\label{sec:cir}

The \emph{Cox--Ingersoll--Ross} (CIR) model, introduced in \cite{cox1985theory}, describes the risk-neutral evolution of the instantaneous risk-free interest rate via a mean-reverting square-root diffusion process. The square-root volatility term ensures the positivity of interest rates (under the Feller condition), while still allowing for closed-form expressions for bond prices and various interest-rate derivatives, making the CIR model a cornerstone of one-factor term structure modeling.

The dynamics of the CIR process are governed by the stochastic differential equation
\begin{equation}
  dr_t = \left(a - b r_t\right)\,dt + \sigma\sqrt{r_t}\,dW_t, \quad t \geq 0,
  \label{CIRSDE}
\end{equation}
where $r_0 > 0$, $a > 0$, $b > 0$, $\sigma > 0$, and $W = \{W_t, t \geq 0\}$ is a standard Wiener process. We assume that the Feller condition is satisfied, which ensures that the CIR process a.s.\ remains strictly positive; this condition is given by
\begin{equation}\label{feller}
2a \ge \sigma^2.
\end{equation}

The solution to the CIR equation has the following probability density function:
\begin{equation}\label{pdf-cir}
    p_t(x) = \frac{1}{2c(t)}\left(\frac{x}{r_0e^{-bt}} \right)^{\nu /2}\exp \left\{-\frac{x+r_0e^{-bt}}{2c(t)}\right\} I_\nu \left( \frac{e^{-bt/2}\sqrt{xr_0}}{c(t)} \right) \ind_{x>0},
\end{equation}
where 
\begin{equation}\label{cir-c-nu}
    c(t) = \frac{\sigma^2}{4b}\left( 1-e^{-bt} \right), \quad \nu = \frac{2a}{\sigma^2}-1.
\end{equation}

As $t \to \infty$, the density $p_t(x)$ converges to the Gamma distribution density:
\begin{equation}\label{cir-conv-to-gamma}
    p_t(x) \to \frac{(2b/\sigma^2)^{2a/\sigma^2}}{\Gamma(2a/\sigma^2)}x^{2a/\sigma^2-1}e^{-2bx/\sigma^2} \ind_{x>0} \eqqcolon p_\infty (x),
\end{equation}
which corresponds to the Gamma distribution with shape parameter $\frac{2a}{\sigma^2}$ and scale parameter $\frac{\sigma^2}{2b}$.

\begin{theorem}[Asymptotic entropy behavior for the CIR model]
\label{thm:CIR_entropy_convergence}
Let $r = \{r_t, t \ge 0\}$ be the CIR process \eqref{CIRSDE} with parameters $a > 0$, $b > 0$, $\sigma > 0$ that satisfy the condition \eqref{feller}, and let $r_\infty$ be a gamma-distributed random variable with PDF $p_\infty(x)$ defined in \eqref{cir-conv-to-gamma}. Then the following limits hold as $t \to \infty$:
\begin{align*}
  \entropy_S(r_t)
  &\to\entropy_S(r_\infty)
  = -\log \frac{2b}{\sigma^2} + \log \Gamma\left(\frac{2a}{\sigma^2}\right) + \frac{2a}{\sigma^2} - \psi\left(\frac{2a}{\sigma^2}\right) \left(\frac{2a}{\sigma^2}-1\right), 
  \\
  \entropy_R(r_t; \alpha)
  &\to\entropy_R(r_\infty; \alpha)
  = -\log \frac{2b}{\sigma^2} -\frac{1}{1-\alpha}
        \log\frac{\Gamma^\alpha\left(\frac{2a}{\sigma^2}\right)}{\Gamma\left(\alpha\left(\frac{2a}{\sigma^2}-1\right)+1\right)}
        -\frac{1-\alpha+\frac{2a\alpha}{\sigma^2}}{1-\alpha}\log\alpha,
  \\
  \entropy_{GR}(r_t; \alpha, \beta )
  &\to\entropy_{GR}(r_\infty; \alpha, \beta )\\
  &=-\log\frac{2b}{\sigma^2} + \frac{1}{\beta-\alpha}\log\left(
\frac{\beta^{\beta\left(\frac{2a}{\sigma^2}-1\right)+1}}{\alpha^{\alpha\left(\frac{2a}{\sigma^2}-1\right)+1}}
\frac{\Gamma\left(\alpha\left(\frac{2a}{\sigma^2}-1\right)+1\right)}{\Gamma\left(\beta\left(\frac{2a}{\sigma^2}-1\right)+1\right)}\right)
+\log\Gamma\left(\frac{2a}{\sigma^2}\right),
  \\
  \entropy_{GR}(r_t; \alpha)
  &\to\entropy_{GR}(r_\infty; \alpha)
  =-\log\frac{2b}{\sigma^2} +\log\Gamma\left(\frac{2a}{\sigma^2}\right)+\left(\frac{2a}{\sigma^2}-1\right)\log\alpha
  \\*
  &\qquad- \left(\frac{2a}{\sigma^2}-1\right)\psi\left(\alpha\left(\frac{2a}{\sigma^2}-1\right)+1\right)+\frac{2a}{\sigma^2}-1+\frac{1}{\alpha},
  \\
  \entropy_T(r_t; \alpha)
  &\to\entropy_T(r_\infty; \alpha)
  =\frac{1}{1-\alpha}\left(\left(\frac{2b}{\sigma^2}\right)^{\alpha-1}
    \frac{\alpha^{\alpha\left(1-\frac{2a}{\sigma^2}\right)-1}\Gamma\left(\alpha\left(\frac{2a}{\sigma^2} - 1\right)+1\right)}{\Gamma^\alpha\left(\frac{2a}{\sigma^2}\right)}-1\right),
  \\
  \entropy_{SM}(r_t; \alpha, \beta )
  &\to\entropy_{SM}(r_\infty; \alpha, \beta)
  \\
  &= \frac{1}{1-\beta}\left(
\frac{\left(\frac{2b}{\sigma^2}\right)^{\beta-1} \alpha^{\frac{\left(\alpha\left(1-\frac{2a}{\sigma^2}\right)-1\right)(1-\beta)}{1-\alpha}}}{\Gamma^{\frac{\alpha(1-\beta)}{1-\alpha}}\left(\frac{2a}{\sigma^2}\right)}
\Gamma^{\frac{1-\beta}{1-\alpha}}\left(\alpha\left(\frac{2a}{\sigma^2}-1\right)+1\right)
-1\right),
\end{align*}
where $\psi(\nu) = \frac{d}{d\nu} \log \Gamma(\nu)$ denotes the digamma function.
\end{theorem}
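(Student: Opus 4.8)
The plan is to realize the one-dimensional CIR distribution as a scaled noncentral chi-squared law and then reduce the statement to Theorem~\ref{thm:EntropyConvergence} via the transformation rules of Lemma~\ref{l:entropy-scaling}. Comparing \eqref{pdf-cir} with \eqref{pdf-noncentral}, I would first verify that
\[
r_t \eqd c(t)\, X_{k,\lambda(t)}, \qquad k = \frac{4a}{\sigma^2}, \quad \lambda(t) = \frac{r_0 e^{-bt}}{c(t)},
\]
by substituting $u = x/c(t)$ into \eqref{pdf-noncentral} and matching, term by term, the power of $x$, the exponential factor, and the argument of the Bessel function; the relation $k/2 - 1 = \nu$ coming from \eqref{cir-c-nu} makes the orders of $I_\nu$ agree. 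Since $c(t) \to \sigma^2/(4b) \eqqcolon c_\infty$ and $\lambda(t) \to 0$ as $t \to \infty$, this places the problem exactly in the regime of Theorem~\ref{thm:EntropyConvergence}.

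Next I would check that the hypotheses of Theorem~\ref{thm:EntropyConvergence} hold. The Feller condition \eqref{feller} gives $k = 4a/\sigma^2 \ge 2$, so in particular $k > 1$ and, since $2 - 2/\alpha < 2$ for every $\alpha > 0$, also $k > 2 - 2/\alpha$; the analogous restrictions for the generalized Rényi and Sharma--Mittal cases hold for the same reason. Hence all six entropies of $X_{k,\lambda(t)}$ exist, and by Theorem~\ref{thm:EntropyConvergence} they converge to the corresponding entropies of the central chi-squared law $X_k$ as $\lambda \to 0$; because $\lambda(t)\to 0$, composition of limits yields $\entropy(X_{k,\lambda(t)}) \to \entropy(X_k)$ for each functional as $t\to\infty$.

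The convergence for $r_t$ then follows by combining this with Lemma~\ref{l:entropy-scaling}. For the Shannon, Rényi, and both generalized Rényi entropies the scaling contributes an additive term $\log c(t) \to \log c_\infty$, while for the Tsallis and Sharma--Mittal entropies it contributes the factors $c(t)^{1-\alpha} \to c_\infty^{1-\alpha}$ and $c(t)^{1-\beta} \to c_\infty^{1-\beta}$ together with the corresponding correction terms; in every case the limit is precisely the entropy of $c_\infty X_k$. Finally, since a central chi-squared law with $k$ degrees of freedom is gamma with shape $k/2 = 2a/\sigma^2$ and scale $2$, the variable $c_\infty X_k$ is gamma with shape $2a/\sigma^2$ and scale $(\sigma^2/4b)\cdot 2 = \sigma^2/(2b)$, which is exactly the law of $r_\infty$ in \eqref{cir-conv-to-gamma}. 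Thus $\lim_{t\to\infty}\entropy(r_t)=\entropy(r_\infty)$ for all six functionals.

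It remains to produce the closed forms on the right-hand sides, which I would obtain by direct evaluation of the defining integrals on the density $p_\infty$. Writing $p = 2a/\sigma^2$ and $\beta_r = 2b/\sigma^2$, the basic computation is
\[
\int_0^\infty p_\infty^\alpha(x)\,dx = \frac{\beta_r^{\alpha-1}\,\Gamma\bigl(\alpha(p-1)+1\bigr)}{\Gamma(p)^\alpha\,\alpha^{\alpha(p-1)+1}},
\]
which converges precisely when $\alpha(p-1)+1 > 0$, equivalent to $k > 2 - 2/\alpha$ and so automatic under \eqref{feller}; the integrals $\int p_\infty^\alpha \log p_\infty\,dx$ needed for the Shannon and $\alpha=\beta$ generalized Rényi entropies are recovered by differentiating this display in $\alpha$, producing the digamma terms $\psi$. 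Substituting these into Definition~\ref{def:entropies} and simplifying the powers of $\alpha$, $\beta$, and $\beta_r$ yields each of the six displayed formulas. I expect the only genuine effort to lie in this last, purely computational bookkeeping of gamma and digamma factors; the convergence itself is immediate from the reduction above, and the step most worth verifying carefully is the term-by-term identification $r_t \eqd c(t)\, X_{k,\lambda(t)}$ with $\lambda(t)\to 0$.
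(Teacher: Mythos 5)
Your proposal follows essentially the same route as the paper: the identification $r_t \eqd c(t)\,X_{4a/\sigma^2,\lambda(t)}$ with $c(t)\to\sigma^2/(4b)$ and $\lambda(t)\to 0$, the verification that the Feller condition forces $k=4a/\sigma^2\ge 2 > 2-2/\alpha$, and the combination of Theorem~\ref{thm:EntropyConvergence} with Lemma~\ref{l:entropy-scaling} to pass the limit, concluding with the identification $\frac{\sigma^2}{4b}X_{4a/\sigma^2}\eqd r_\infty$. The only cosmetic difference is that the paper cites known closed-form gamma-distribution entropies where you compute the integral $\int_0^\infty p_\infty^\alpha\,dx$ (correctly) by hand; your argument is correct.
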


\begin{proof}
Comparing the expressions \eqref{pdf-noncentral} and \eqref{pdf-cir}, we observe that
\[
p_t(x) = \frac{1}{c(t)} f_{X_{2\nu+2, \lambda(t)}} \left(\frac{x}{c(t)}\right),
\]
where $f_{X_{2\nu+2, \lambda(t)}}$ is the PDF of the noncentral chi-squared distribution with $k = 2\nu+2$ degrees of freedom and noncentrality parameter
\[
\lambda(t) = \frac{r_0e^{-bt}}{c(t)},
\]
with $\nu$ and $c(t)$ defined in \eqref{cir-c-nu}.

Thus, the distribution of $r_t$ coincides with that of a noncentral chi-squared random variable scaled by $c(t)$:
\[
r_t \eqd c(t) X_{4a/\sigma^2, \lambda(t)}.
\]

As $t \to \infty$, we have
\[
c(t) \to \frac{\sigma^2}{4b}
\quad \text{and} \quad
\lambda(t) \to 0.
\]

Consider first the Shannon entropy. By Theorem~\ref{thm:EntropyConvergence} and Lemma~\ref{l:entropy-scaling}, we have
\begin{align*}
\entropy_S(r_t)
&= \entropy_S \left(c(t) X_{4a/\sigma^2, \lambda(t)}\right)
= \entropy_S \left(X_{4a/\sigma^2, \lambda(t)}\right)
+ \log c(t)
\\
&\to \entropy_S \left(X_{4a/\sigma^2}\right)
+ \log \frac{\sigma^2}{4b}
= \entropy_S \left(\frac{\sigma^2}{4b} X_{4a/\sigma^2}\right),
\quad \text{as } t \to \infty.
\end{align*}

It remains to observe that
\[
\frac{\sigma^2}{4b} X_{4a/\sigma^2} \eqd r_\infty,
\]
as can be verified by comparing their PDFs:
\[
\frac{4b}{\sigma^2} f_{X_{4a/\sigma^2}}\left(\frac{4b}{\sigma^2} x\right) = p_\infty(x).
\]
Hence, $\entropy_S(r_t) \to \entropy_S(r_\infty)$ as $t\to\infty$.

The convergence of the other entropy functionals of $r_t$ to those of $r_\infty$ follows by applying the same scaling argument and the convergence result from Theorem~\ref{thm:EntropyConvergence}.
Observe that, under the Feller condition \eqref{feller}, the number of degrees of freedom satisfies $k = \frac{4a}{\sigma^2} \ge 2$. Consequently, the condition  $k > 2 -\frac{2}{\alpha}$ in Theorem~\ref{thm:EntropyConvergence} holds for any  $\alpha > 0$.

The entropies of $r_\infty$ are computed explicitly using closed-form expressions for the entropies of the gamma distribution; see \cite[Proposition~3.2]{bodnarchuk2024properties} and also \cite[Proposition~1.15]{entropybook}.
\end{proof}

\subsection{Asymptotic behavior of the entropies of the squared Bessel process as $t \to \infty$}

We now consider the limiting case $b = 0$ of the CIR process, known as the \emph{squared Bessel process}; see, e.g., \cite{GJY03,PitmanYor} or \cite[Chapter XI]{RevuzYor} for details.  
This process $Y = \{Y_t,\, t \ge 0\}$ is defined as the unique strong solution to the stochastic differential equation
\begin{equation}\label{bes}
dY_t = a\, dt + \sigma \sqrt{Y_t}\, dW_t, \quad t \ge 0,
\end{equation}
where $W = \{W_t,\, t \ge 0\}$ is a Wiener process. As in subsection~\ref{sec:cir}, we assume that $Y_0 > 0$, $a > 0$, $\sigma > 0$, and that the Feller condition \eqref{feller} is satisfied.
This condition ensures that the process $Y$ remains almost surely strictly positive, similarly to the CIR process.

The probability density function of the squared Bessel process $Y_t$ is given by
\begin{equation}\label{pdf-bessel}
g_t(x) = \frac{1}{2} \left(\frac{x}{Y_0}\right)^{\nu/2}
\exp\left\{-\frac{2(x+Y_0)}{\sigma^2 t}\right\}
I_{\nu} \left(\frac{4 \sqrt{x Y_0}}{\sigma^2 t}\right) \ind_{x > 0},
\end{equation}
where, as before, $\nu = \frac{2a}{\sigma^2} - 1$; see, e.g., \cite[Chapter XI, Corollary 1.4]{RevuzYor}.

In contrast to the CIR process $X$, the squared Bessel process $Y$ is \emph{non-ergodic}. Since $I_\nu(0) = 0$, it follows that
\[
g_t(x) \to 0, \quad \text{as } t \to \infty.
\]
Hence, the squared Bessel process does not possess a limiting (stationary) distribution.

\begin{theorem}
\label{thm:bessel_entropy_convergence}
Let $Y = \{Y_t, t \ge 0\}$ be the squared Bessel process \eqref{bes} with $Y_0 > 0$ and parameters $a > 0$ and $\sigma > 0$ satisfying the Feller condition \eqref{feller}. Then the following limits hold:
\begin{gather*}
    \lim_{t\to\infty}\entropy_S(Y_t)
  = 
  \lim_{t\to\infty}\entropy_R(Y_t; \alpha)
  = 
  \lim_{t\to\infty}\entropy_{GR}(Y_t; \alpha, \beta )
  =
  \lim_{t\to\infty}\entropy_{GR}(Y_t; \alpha)
  = \infty,
  \\
  \lim_{t\to\infty}\entropy_T(Y_t; \alpha)
  =
  \begin{cases}
  \infty, & \text{if } \alpha < 1, \\
  \frac{1}{\alpha-1}, &  \text{if } \alpha > 1,
  \end{cases}
  \\
  \lim_{t\to\infty}\entropy_{SM}(Y_t; \alpha, \beta )
  =
  \begin{cases}
  \infty, & \text{if } \beta < 1, \\
  \frac{1}{\beta-1}, &  \text{if } \beta > 1.
  \end{cases}
\end{gather*}
\end{theorem}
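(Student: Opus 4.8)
The plan is to reduce everything to the noncentral chi-squared setting via a scaling identity, exactly as in the proof of Theorem~\ref{thm:CIR_entropy_convergence}, and then exploit the fact that the scaling factor now diverges. Comparing the Bessel density \eqref{pdf-bessel} with the noncentral chi-squared density \eqref{pdf-noncentral}, I would first record the representation
\[
Y_t \eqd c(t)\, X_{k,\lambda(t)}, \qquad c(t) = \frac{\sigma^2 t}{4}, \quad k = \frac{4a}{\sigma^2}, \quad \lambda(t) = \frac{Y_0}{c(t)} = \frac{4Y_0}{\sigma^2 t},
\]
obtained by matching the exponential factor, the power factor, and the Bessel-function argument term by term (this is the $b\downarrow 0$ analogue of the CIR identity). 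The Feller condition \eqref{feller} gives $k = 4a/\sigma^2 \ge 2$, so in particular $k>1$ and $k > 2 - \tfrac{2}{\alpha\vee\beta}$ for every $\alpha,\beta>0$, and all the existence hypotheses of Theorem~\ref{thm:EntropyConvergence} are met. The essential new feature compared with the CIR case is that here $\lambda(t)\to 0$ but $c(t)\to\infty$ as $t\to\infty$.

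For the Shannon, Rényi, and both generalized Rényi entropies I would apply the additive scaling rules of Lemma~\ref{l:entropy-scaling}, writing for instance $\entropy_S(Y_t) = \entropy_S(X_{k,\lambda(t)}) + \log c(t)$. Since $\lambda(t)\to 0$, Theorem~\ref{thm:EntropyConvergence} shows the leading term converges to the finite constant $\entropy_S(X_k)$ (respectively $\entropy_R(X_k;\alpha)$, $\entropy_{GR}(X_k;\alpha,\beta)$, $\entropy_{GR}(X_k;\alpha)$), while $\log c(t) = \log(\sigma^2 t/4)\to+\infty$. A convergent-plus-divergent sum diverges, which yields the four $\infty$ limits at once.

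The Tsallis and Sharma--Mittal entropies are the delicate part, because their scaling rules are multiplicative with a correction term and the naive limit is indeterminate (for $\alpha<1$ one faces ``$\infty\cdot\text{const}+\infty$''). Here I would not use the scaling formulas in isolation but recombine them. Writing $M(t) = \int_{\real} f_{X_{k,\lambda(t)}}^\alpha(x)\,dx$, the scaling of the integral gives $\int_{\real} g_t^\alpha(x)\,dx = c(t)^{1-\alpha}M(t)$, so that
\[
\entropy_T(Y_t;\alpha) = \frac{1}{1-\alpha}\bigl(c(t)^{1-\alpha}M(t) - 1\bigr), \qquad
\entropy_{SM}(Y_t;\alpha,\beta) = \frac{1}{1-\beta}\Bigl(c(t)^{1-\beta}M(t)^{\frac{1-\beta}{1-\alpha}} - 1\Bigr),
\]
after simplifying the exponent $(1-\alpha)\cdot\frac{1-\beta}{1-\alpha}=1-\beta$ in the Sharma--Mittal case. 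By Theorem~\ref{thm:EntropyConvergence}, $M(t)\to M_0 := \int_{\real} f_{X_k}^\alpha(x)\,dx$, a finite and strictly positive constant. The outcome is then dictated solely by the sign of the exponent of $c(t)$: if $\alpha<1$ (resp. $\beta<1$) then $c(t)^{1-\alpha}\to\infty$ (resp. $c(t)^{1-\beta}\to\infty$) and, since $M_0>0$, the expression tends to $+\infty$; if $\alpha>1$ (resp. $\beta>1$) then $c(t)^{1-\alpha}\to 0$ (resp. $c(t)^{1-\beta}\to 0$), the product vanishes, and the limit equals $\frac{1}{1-\alpha}(0-1)=\frac{1}{\alpha-1}$ (resp. $\frac{1}{\beta-1}$).

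The main obstacle is therefore not analytic but structural: recognizing that the indeterminate multiplicative-plus-correction expressions from Lemma~\ref{l:entropy-scaling} collapse into the single clean quantities above, whose limits are unambiguous. The one genuinely substantive point is the role of $M_0 \in (0,\infty)$: its finiteness --- guaranteed by the existence condition $k > 2 - \tfrac{2}{\alpha}$ in Theorem~\ref{thm:EntropyConvergence}, which holds because $k\ge 2$ under \eqref{feller} --- forces the $\alpha>1$ (resp. $\beta>1$) limit to be finite, while its strict positivity upgrades ``bounded below'' to genuine divergence in the $\alpha<1$ (resp. $\beta<1$) case.
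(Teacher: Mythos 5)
Your proposal is correct and takes essentially the same route as the paper: the same representation $Y_t \eqd c_0(t)\,X_{4a/\sigma^2,\lambda_0(t)}$ with $c_0(t)=\sigma^2 t/4\to\infty$ and $\lambda_0(t)\to 0$, the additive scaling of Lemma~\ref{l:entropy-scaling} combined with Theorem~\ref{thm:EntropyConvergence} and $\log c_0(t)\to\infty$ for the four divergent entropies, and the sign of the exponent of $c_0(t)$ in the Tsallis and Sharma--Mittal cases. The only difference is minor: where the paper checks the sign of the coefficient $\entropy_T(X_{4a/\sigma^2};\alpha)+\frac{1}{1-\alpha}$ via the closed-form expression from \cite[Proposition 3.6]{bodnarchuk2024properties}, you note directly that this coefficient equals $\frac{1}{1-\alpha}\int_{\real} f_{X_{4a/\sigma^2}}^\alpha(x)\,dx$ with a manifestly positive integral, a slight simplification of the same computation.
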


\begin{proof}
The proof follows the same approach as in Theorem~\ref{thm:CIR_entropy_convergence}. Specifically, we analyze the asymptotic behavior of each entropy using a representation of $Y_t$ as a scaled noncentral chi-squared variable. Comparing the PDF \eqref{pdf-bessel} of $Y_t$ with the PDF \eqref{pdf-noncentral} of a noncentral chi-squared random variable $X_{k,\lambda}$ leads to the following distributional representation:
\[
Y_t \eqd  c_0(t) X_{4a/\sigma^2, \lambda_0(t)},
\]
where
\[
c_0(t) = \frac{\sigma^2 t}{4}, 
\qquad 
\lambda_0(t) = \frac{4 Y_0}{\sigma^2 t}.
\]
Observe that as $t \to \infty$,
\[
c_0(t) \to \infty, 
\qquad 
\lambda_0(t) \to 0.
\]

For the Shannon entropy, Lemma~\ref{l:entropy-scaling} yields
\begin{equation}\label{shannon-bessel}
\entropy_S(Y_t)
= \entropy_S \left(c_0(t) X_{4a/\sigma^2, \lambda_0(t)}\right)
= \entropy_S \left(X_{4a/\sigma^2, \lambda_0(t)}\right)
+ \log c_0(t).
\end{equation}
By Theorem~\ref{thm:EntropyConvergence}, the first term on the right-hand side of \eqref{shannon-bessel} converges to the finite limit $\entropy_S(X_{4a/\sigma^2})$ as $t \to \infty$. Since $\log c_0(t) \to \infty$, we conclude that $\entropy_S(Y_t) \to \infty$.

The same reasoning applies to $\entropy_R(Y_t; \alpha)$, $\entropy_{GR}(Y_t; \alpha, \beta)$, and $\entropy_{GR}(Y_t; \alpha)$, which completes the first part of the theorem.

For the Tsallis entropy, Lemma~\ref{l:entropy-scaling} gives
\begin{equation}\label{tsallis-bessel}
\entropy_T(Y_t; \alpha) 
= \entropy_T \left(c_0(t) X_{4a/\sigma^2, \lambda_0(t)}; \alpha\right)
= c_0^{1 - \alpha}(t) \left(\entropy_T \left(X_{4a/\sigma^2, \lambda_0(t)};\alpha \right) + \frac{1}{1-\alpha}\right)
- \frac{1}{1-\alpha}.
\end{equation}
Since $c_0(t) \to \infty$, we have
\[
c_0^{1 - \alpha}(t) \to 
 \begin{cases}
  \infty, & \text{if } \alpha < 1, \\
  0, &  \text{if } \alpha > 1,
  \end{cases}
  \qquad \text{as } t \to \infty.
\]
Furthermore, by Theorem~\ref{thm:EntropyConvergence}, the entropy 
$\entropy_T(X_{4a/\sigma^2, \lambda_0(t)}; \alpha)$ converges to the finite limit \linebreak
$\entropy_T(X_{4a/\sigma^2}; \alpha)$ as $t \to \infty$.
Using \cite[Proposition 3.6]{bodnarchuk2024properties}, we compute
\[
\entropy_T\left(X_{4a/\sigma^2}; \alpha\right) + \frac{1}{1 - \alpha}
= \frac{1}{1 - \alpha} \cdot
    \frac{2^{1 - \alpha} \alpha^{\alpha\left(1 - \frac{2a}{\sigma^2}\right) - 1} \Gamma\left(\alpha\left(\frac{2a}{\sigma^2} - 1\right) + 1\right)}{\Gamma^\alpha\left(\frac{2a}{\sigma^2}\right)},
\]
which is positive if and only if $\alpha < 1$.
Thus, \eqref{tsallis-bessel} implies that $\entropy_T(Y_t; \alpha) \to \infty$ if $\alpha < 1$, and $\entropy_T(Y_t; \alpha) \to \frac{1}{\alpha - 1}$ if $\alpha > 1$.

The statement for the Sharma--Mittal entropy $\entropy_{SM}(Y_t; \alpha, \beta)$ follows by a similar argument.
\end{proof}
\begin{remark} It is possible to characterize the asymptotic behavior of those entropies that tend to infinity, as normal, but the behavior of Tsallis entropy $\entropy_T(Y_t; \alpha)$ for $\alpha>1$ and Sharma-Mittal entropy  $\entropy_{SM}(Y_t; \alpha, \beta )$ for $\beta>1$ as anomalous, in some sense. This gives some idea of the different behavior of entropies. Another anomalies of entropies (of Poisson distribution) are considered in \cite{finkel}.
    
\end{remark}
\subsection{Convergence of the entropies of the CIR process to those of the squared Bessel process}

We now show that, as the mean-reversion parameter $b$ tends to zero, the entropy functionals of the CIR process converge to those of the squared Bessel process with matching parameters and initial condition.

\begin{theorem}
Let $r = \{r_t, t \ge 0\}$ be the CIR process \eqref{CIRSDE} with parameters $a > 0$, $b > 0$, and $\sigma > 0$ satisfying the Feller condition \eqref{feller}. Let $Y = \{Y_t, t \ge 0\}$ denote the squared Bessel process \eqref{bes} with the same parameters $a$ and $\sigma$, and with the same initial condition $Y_0 = r_0 > 0$. Then, for all $t > 0$, the following convergences hold as $b \downarrow 0$:
\begin{align*}
\entropy_S(r_t) &\to \entropy_S(Y_t), 
  \\
\entropy_R(r_t; \alpha) &\to \entropy_R(Y_t; \alpha),
  \\
\entropy_{GR}(r_t; \alpha, \beta )
  &\to \entropy_{GR}(Y_t; \alpha, \beta ),
  \\
\entropy_{GR}(r_t; \alpha) &\to \entropy_{GR}(Y_t; \alpha),
  \\
\entropy_T(r_t; \alpha) &\to \entropy_T(Y_t; \alpha),
  \\
\entropy_{SM}(r_t; \alpha, \beta ) &\to \entropy_{SM}(Y_t; \alpha, \beta).
\end{align*}
\end{theorem}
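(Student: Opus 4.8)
The plan is to reuse the scaled noncentral chi-squared representations established in the proofs of Theorems~\ref{thm:CIR_entropy_convergence} and~\ref{thm:bessel_entropy_convergence}. Both $r_t$ and $Y_t$ are scaled noncentral chi-squared variables with the \emph{same} number of degrees of freedom $k = 4a/\sigma^2$, namely $r_t \eqd c(t)\, X_{k,\lambda(t)}$ and $Y_t \eqd c_0(t)\, X_{k,\lambda_0(t)}$, where $c(t) = \frac{\sigma^2}{4b}(1-e^{-bt})$, $\lambda(t) = r_0 e^{-bt}/c(t)$, $c_0(t) = \sigma^2 t/4$, and $\lambda_0(t) = 4Y_0/(\sigma^2 t)$. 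Under the Feller condition \eqref{feller} we have $k = 4a/\sigma^2 \ge 2 > 1$, so every entropy in the statement exists for all admissible $\alpha,\beta$ (the condition $k > 2 - 2/\alpha$ from Theorem~\ref{thm:EntropyConvergence} being automatic). Crucially, the dependence on $b$ enters only through the scaling factor $c(t)$ and the noncentrality parameter $\lambda(t)$, while $k$ stays fixed.

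The key elementary step is to identify the limits of these two quantities as $b \downarrow 0$ with $t > 0$ held fixed. Using $\lim_{b\downarrow 0} \frac{1-e^{-bt}}{b} = t$ one obtains $c(t) \to \frac{\sigma^2 t}{4} = c_0(t)$, and since $e^{-bt} \to 1$ and $r_0 = Y_0$, this gives $\lambda(t) = \frac{r_0 e^{-bt}}{c(t)} \to \frac{4 r_0}{\sigma^2 t} = \frac{4 Y_0}{\sigma^2 t} = \lambda_0(t)$. Both limits are therefore the Bessel-process parameters, and the target limit $\lambda_0(t)$ is \emph{strictly positive} for every $t > 0$ because $Y_0 > 0$.

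With these two limits in hand, the convergence of each entropy follows by combining Lemma~\ref{l:entropy-scaling} with the continuity-in-$\lambda$ statement of Remark~\ref{rem:conv-chi-sq}. For the Shannon, Rényi, and both generalized Rényi entropies, Lemma~\ref{l:entropy-scaling} produces an additive term $\log c(t)$, which converges to $\log c_0(t)$ by continuity of the logarithm, while the chi-squared contribution $\entropy(X_{k,\lambda(t)})$ converges to $\entropy(X_{k,\lambda_0(t)})$ by Remark~\ref{rem:conv-chi-sq}. For the Tsallis and Sharma--Mittal entropies, the scaling lemma instead introduces the prefactor $c(t)^{1-\alpha}$ (respectively $c(t)^{1-\beta}$) together with an additive correction; since $c(t) \to c_0(t) > 0$, the power converges to $c_0(t)^{1-\alpha}$ (respectively $c_0(t)^{1-\beta}$) by continuity of $x \mapsto x^{1-\alpha}$ on $(0,\infty)$, and again the chi-squared entropy converges by Remark~\ref{rem:conv-chi-sq}. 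In every case the resulting limit is precisely the entropy of $c_0(t)\, X_{k,\lambda_0(t)} \eqd Y_t$.

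The only step requiring genuine care is the convergence $\entropy(X_{k,\lambda(t)}) \to \entropy(X_{k,\lambda_0(t)})$, i.e.\ the continuity of the noncentral chi-squared entropies in the noncentrality parameter. This is exactly what Remark~\ref{rem:conv-chi-sq} supplies, and it applies here without modification because the limiting value $\lambda_0(t) = 4Y_0/(\sigma^2 t)$ is positive for all $t > 0$; we never approach the degenerate case $\lambda_0 = 0$, which is handled separately in Theorem~\ref{thm:EntropyConvergence}. Everything else reduces to continuity of elementary functions of $c(t)$, so I expect no further obstacles.
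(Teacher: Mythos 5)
Your proposal is correct and follows essentially the same route as the paper's own proof: both express $r_t$ and $Y_t$ as scaled noncentral chi-squared variables with the same degrees of freedom, pass to the limits $c(t)\to c_0(t)$ and $\lambda(t)\to\lambda_0(t)$ as $b\downarrow 0$, and conclude via Lemma~\ref{l:entropy-scaling} together with Remark~\ref{rem:conv-chi-sq}. Your added observations (positivity of $\lambda_0(t)$, the Feller condition guaranteeing existence, and the continuity of the Tsallis/Sharma--Mittal prefactors) are accurate refinements of the same argument.
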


\begin{proof}
In the proofs of Theorems \ref{thm:CIR_entropy_convergence} and \ref{thm:bessel_entropy_convergence}, it was shown that the one-dimensional distributions of the CIR and Bessel processes are scaled noncentral chi-squared distributions. Specifically,
\[
r_t \eqd c(t) X_{4a/\sigma^2, \lambda(t)},
\quad
Y_t \eqd  c_0(t) X_{4a/\sigma^2, \lambda_0(t)},
\]
where
\[
 c(t) = \frac{\sigma^2}{4b}\left( 1 - e^{-bt} \right) \to \frac{\sigma^2 t}{4} = c_0(t), 
\quad 
\lambda(t) = \frac{r_0 e^{-bt}}{c(t)} \to \frac{4 r_0}{\sigma^2 t} = \lambda_0(t),
\quad \text{as } b \downarrow 0.
\]
Therefore, by arguments analogous to those used in the proofs of Theorems \ref{thm:CIR_entropy_convergence} and \ref{thm:bessel_entropy_convergence}, and employing Remark~\ref{rem:conv-chi-sq} in place of Theorem \ref{thm:EntropyConvergence}, we obtain the stated convergence results.

For instance, in the case of Shannon entropy, we have:
\begin{align*}
\entropy_S(r_t)
&= \entropy_S \left(c(t) X_{4a/\sigma^2, \lambda(t)}\right)
= \entropy_S \left(X_{4a/\sigma^2, \lambda(t)}\right)
+ \log c(t)
\\
&\to \entropy_S \left(X_{4a/\sigma^2, \lambda_0(t)}\right)
+ \log c_0(t)
= \entropy_S \left(c_0(t) X_{4a/\sigma^2, \lambda_0(t)}\right)
= \entropy_S(Y_t),
\quad \text{as } b \downarrow 0.
\end{align*}

The convergence of the other entropy functionals follows analogously.
\end{proof}

\appendix
\section{Auxiliary results}

\subsection{Inequalities for the modified Bessel function of the first kind and the PDF of the noncentral chi-squared distribution}
\label{app:inequality}

In this appendix, we present several inequalities for the modified Bessel function of the first kind $I_\nu(x)$, which appears in the PDF of the noncentral chi-squared distribution. We also derive the corresponding inequalities for the PDF itself, which are instrumental in our proofs.

Let $\nu > -1$ and $x \in \mathbb{R}$. The \emph{modified Bessel function of the first kind} of order $\nu$ is defined by
\begin{equation}\label{bessel-func}
    I_{\nu}(x) = \sum_{m = 0}^{\infty} \frac{1}{m! \, \Gamma(m + \nu + 1)} \left(\frac{x}{2}\right)^{2m + \nu}.
\end{equation}
Throughout this paper, we only consider the case $x > 0$, where the function $I_{\nu}(x)$ is real-valued for any $\nu > -1$. For additional properties and details, we refer the reader to the classical monograph~\cite{Watson}.

According to \cite[formula (6.25)]{Luke1972}, for $x > 0$ and $\nu > -\frac{1}{2}$, the function $I_\nu(x)$ satisfies the following two-sided inequality:
\begin{equation}\label{ineq-luke}
    e^{-x} < \Gamma(\nu + 1)\left( \frac{2}{x} \right)^\nu e^{-x}I_\nu (x) < \frac{1}{2}\left(1 + e^{-2x}\right).
\end{equation}

The following two lemmas are essential in establishing the convergence theorems for entropies in Section~\ref{sec:conv-shi-sq}. First, we bound $I_\nu(x)$ from both sides using functions that are more analytically tractable. Then we apply these bounds to the PDF of the noncentral chi-squared distribution to eliminate the Bessel function from the expression. These inequalities are used repeatedly in Appendix~\ref{app:integrals} to construct integrable majorants for the entropy integrands, thereby enabling the application of the dominated convergence theorem.

\begin{lemma}[Bounds for the modified Bessel function of the first kind]\label{lemma:bessel_bounds}
For $x > 0$ and $\nu > -\frac{1}{2}$, the modified Bessel function of the first kind satisfies the inequalities
\begin{align}
    \left( \frac{x}{2} \right)^\nu \frac{1}{\Gamma(\nu + 1)} <
    I_\nu(x) < \left( \frac{x}{2} \right)^\nu \frac{1}{\Gamma(\nu + 1)} e^x. \label{lemma:bessel_bounds_inequalities}
\end{align}
\end{lemma}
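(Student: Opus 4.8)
The plan is to derive both inequalities directly from Luke's two-sided bound \eqref{ineq-luke}, which is stated precisely for the range $x > 0$, $\nu > -\frac{1}{2}$ occurring in the lemma. The central quantity in \eqref{ineq-luke} is exactly $\Gamma(\nu+1)\left(\frac{2}{x}\right)^\nu e^{-x} I_\nu(x)$, so it suffices to isolate $I_\nu(x)$ from each side; each rearrangement will produce a bound of the required shape. The key observation that makes this clean is that the divisor $\Gamma(\nu+1)\left(\frac{2}{x}\right)^\nu e^{-x}$ is strictly positive (since $x>0$, and $\nu>-\tfrac12$ gives $\Gamma(\nu+1)>0$), so dividing through preserves the direction of the strict inequalities.

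For the lower bound I would take the left-hand inequality $e^{-x} < \Gamma(\nu+1)\left(\frac{2}{x}\right)^\nu e^{-x} I_\nu(x)$ and divide both sides by $\Gamma(\nu+1)\left(\frac{2}{x}\right)^\nu e^{-x}$. The factors $e^{-x}$ cancel, leaving $\left(\frac{x}{2}\right)^\nu \frac{1}{\Gamma(\nu+1)} < I_\nu(x)$, which is exactly the lower bound in \eqref{lemma:bessel_bounds_inequalities}. For the upper bound I would start from the right-hand inequality $\Gamma(\nu+1)\left(\frac{2}{x}\right)^\nu e^{-x} I_\nu(x) < \frac{1}{2}\left(1+e^{-2x}\right)$ and divide through by the same positive factor, obtaining $I_\nu(x) < \frac{1}{\Gamma(\nu+1)}\left(\frac{x}{2}\right)^\nu \cdot \frac{1}{2}\left(e^{x}+e^{-x}\right)$.

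The only genuinely non-mechanical step is then to replace the hyperbolic-cosine factor by $e^x$: since $e^{-x} < e^x$ for every $x>0$, one has $\frac{1}{2}\left(e^{x}+e^{-x}\right) < e^{x}$, which upgrades the bound to the stated $I_\nu(x) < \left(\frac{x}{2}\right)^\nu \frac{1}{\Gamma(\nu+1)} e^{x}$ while keeping the inequality strict. I do not anticipate any real obstacle here; the main points requiring care are verifying that all divisors are positive (so no inequality flips) and checking that the estimate $\frac{1}{2}\left(e^{x}+e^{-x}\right)<e^{x}$ is strict on $(0,\infty)$, ensuring strictness throughout.

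As an alternative route to the lower bound that avoids \eqref{ineq-luke} entirely, I would note that the series \eqref{bessel-func} can be written as $I_\nu(x) = \left(\frac{x}{2}\right)^\nu \sum_{m=0}^\infty \frac{1}{m!\,\Gamma(m+\nu+1)}\left(\frac{x}{2}\right)^{2m}$; the term $m=0$ equals $\left(\frac{x}{2}\right)^\nu \frac{1}{\Gamma(\nu+1)}$, and every remaining term is strictly positive for $x>0$ and $\nu>-1$, which gives the lower bound with strict inequality. I would rely on Luke's inequality for the upper bound, however, since a direct term-by-term comparison of the even-power series with $e^x$ is not available.
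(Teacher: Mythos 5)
Your proposal is correct and follows essentially the same route as the paper: both rearrange Luke's two-sided inequality \eqref{ineq-luke} to isolate $I_\nu(x)$, recognize the resulting upper factor $\frac{1}{2}\left(e^{x}+e^{-x}\right)$ as $\cosh(x)$, and then apply the strict bound $\cosh(x)<e^{x}$ for $x>0$. Your alternative series-based derivation of the lower bound is a valid (and even slightly more general, working for $\nu>-1$) supplement, but it does not change the substance of the argument.
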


\begin{proof}
Rewriting inequality~\eqref{ineq-luke}, we obtain
\begin{align*}
    \left( \frac{x}{2} \right)^\nu \frac{1}{\Gamma(\nu + 1)} <
    I_\nu(x) < \left( \frac{x}{2} \right)^\nu \frac{1}{\Gamma(\nu + 1)} \cosh(x).
\end{align*}
Since $\cosh(x) < e^x$ for all $x > 0$, the desired result follows.
\end{proof}

\begin{lemma}[Bounds for the PDF of the noncentral chi-squared distribution]\label{lemma:noncentralpdfbounds}
Let $f_{X_{k,\lambda}}(x)$ denote the PDF of the noncentral chi-squared distribution with $k > 1$ degrees of freedom and noncentrality parameter $\lambda > 0$. Then, for all $x > 0$, the following bounds hold:
\begin{align*}
    \frac{1}{2^{k/2}} e^{-(x + \lambda)/2} x^{k/2 - 1} \frac{1}{\Gamma(k/2)}
    \leq f_{X_{k,\lambda}}(x)
    \le \frac{1}{2^{k/2}} e^{-x/4+\lambda/2} x^{k/2 - 1} \frac{1}{\Gamma(k/2)}.
\end{align*}
\end{lemma}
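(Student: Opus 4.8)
The plan is to eliminate the modified Bessel function from the density \eqref{pdf-noncentral} by inserting the two-sided bounds of Lemma~\ref{lemma:bessel_bounds}, and then to reduce the resulting elementary expressions to the claimed form. First I would set $\nu = k/2 - 1$ and observe that the hypothesis $k > 1$ is exactly what guarantees $\nu > -\tfrac12$, so that Lemma~\ref{lemma:bessel_bounds} is applicable to $I_{k/2-1}\bigl(\sqrt{\lambda x}\,\bigr)$ for every $x > 0$. Substituting the argument $\sqrt{\lambda x}$ into \eqref{lemma:bessel_bounds_inequalities} and using $\Gamma(\nu+1)=\Gamma(k/2)$ yields
\[
\left(\frac{\sqrt{\lambda x}}{2}\right)^{k/2-1}\frac{1}{\Gamma(k/2)}
< I_{k/2-1}\bigl(\sqrt{\lambda x}\,\bigr)
< \left(\frac{\sqrt{\lambda x}}{2}\right)^{k/2-1}\frac{1}{\Gamma(k/2)}\,e^{\sqrt{\lambda x}}.
\]

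Next I would substitute these bounds directly into \eqref{pdf-noncentral}. The crucial simplification is purely algebraic: the density prefactor $(x/\lambda)^{k/4-1/2}$ multiplies the factor $(\sqrt{\lambda x}/2)^{k/2-1} = (\lambda x)^{k/4-1/2}/2^{k/2-1}$ coming from the Bessel estimate, whereupon all powers of $\lambda$ cancel and one is left with $x^{k/2-1}/2^{k/2-1}$. Combined with the leading $\tfrac12$, this produces the common factor $\tfrac{1}{2^{k/2}}\,x^{k/2-1}/\Gamma(k/2)$ in both bounds. The lower bound then reads off immediately as $\tfrac{1}{2^{k/2}}e^{-(x+\lambda)/2}x^{k/2-1}/\Gamma(k/2)$, which is exactly the claimed left-hand side.

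For the upper bound the identical computation yields $\tfrac{1}{2^{k/2}}e^{-(x+\lambda)/2+\sqrt{\lambda x}}x^{k/2-1}/\Gamma(k/2)$, so it remains to dominate the exponent. This is the one genuinely nontrivial step: I must show $-\tfrac{x+\lambda}{2}+\sqrt{\lambda x} \le -\tfrac{x}{4}+\tfrac{\lambda}{2}$, which rearranges to $\sqrt{\lambda x}\le \tfrac{x}{4}+\lambda$. Writing $u=\sqrt{x}$ and $v=\sqrt{\lambda}$, this is the AM--GM-type inequality $uv \le \tfrac{u^2}{4}+v^2$, equivalent to $\bigl(\tfrac{u}{2}-v\bigr)^2\ge 0$, and hence valid for all $x,\lambda>0$. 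Substituting back into the exponent completes the upper bound and the proof. I expect this completing-the-square estimate---choosing the split that produces precisely the coefficients $\tfrac14$ and $\tfrac12$---to be the main (and essentially only) obstacle; everything else is routine cancellation of powers of $x$ and $\lambda$.
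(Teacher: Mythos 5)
Your proposal is correct and follows essentially the same route as the paper: insert the two-sided Bessel bounds of Lemma~\ref{lemma:bessel_bounds} into \eqref{pdf-noncentral}, cancel the powers of $\lambda$, and then dominate the exponent $-(x+\lambda)/2+\sqrt{\lambda x}$ by $-x/4+\lambda/2$ via the completing-the-square inequality $\sqrt{\lambda x}\le \tfrac{x}{4}+\lambda$. In fact your final step is stated more carefully than in the paper, whose proof invokes the inequality ``$-x/4+\sqrt{\lambda x}\le \lambda/2$'' --- which is false as written (take $x=\lambda$) and is evidently a slip for the correct bound $-x/4+\sqrt{\lambda x}\le \lambda$ that you prove and that suffices for the claimed estimate.
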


\begin{proof}
Recall that the PDF of the noncentral chi-squared distribution is given by
\begin{align*}
    f_{X_{k,\lambda}}(x) = \frac{1}{2} e^{-(x + \lambda)/2} \left(\frac{x}{\lambda}\right)^{k/4 - 1/2} I_{k/2 - 1}(\sqrt{\lambda x})
\end{align*}
for $k, \lambda > 0$.

Applying the bounds from~\eqref{lemma:bessel_bounds_inequalities} to $I_{k/2 - 1}(\sqrt{\lambda x})$, we obtain:
\begin{multline}\label{lemma:noncentralpdfbounds:proof:1}
    \frac{1}{2} e^{-(x + \lambda)/2} \left(\frac{x}{\lambda}\right)^{k/4 - 1/2} \left( \frac{\sqrt{\lambda x}}{2} \right)^{k/2 - 1} \frac{1}{\Gamma(k/2)} \\
    \leq f_{X_{k,\lambda}}(x) \leq
    \frac{1}{2} e^{-(x + \lambda)/2} \left(\frac{x}{\lambda}\right)^{k/4 - 1/2} \left( \frac{\sqrt{\lambda x}}{2} \right)^{k/2 - 1} \frac{1}{\Gamma(k/2)} e^{\sqrt{\lambda x}}.
\end{multline}
Simplifying~\eqref{lemma:noncentralpdfbounds:proof:1}, we find:
\[
    \frac{1}{2^{k/2}} e^{-(x + \lambda)/2} x^{k/2 - 1} \frac{1}{\Gamma(k/2)}
    \leq f_{X_{k,\lambda}}(x)
    \leq \frac{1}{2^{k/2}} e^{-(x + \lambda)/2 + \sqrt{\lambda x}} x^{k/2 - 1} \frac{1}{\Gamma(k/2)}.
\]
Finally, applying the inequality $-x/4 + \sqrt{\lambda x} \leq \lambda/2$, which holds for all $x, \lambda > 0$, yields
\[
    \frac{1}{2^{k/2}} e^{-(x + \lambda)/2} x^{k/2 - 1} \frac{1}{\Gamma(k/2)}
    \leq f_{X_{k,\lambda}}(x)
    \leq \frac{1}{2^{k/2}} e^{-x/4 + \lambda/2} x^{k/2 - 1} \frac{1}{\Gamma(k/2)},
\]
as claimed.
\end{proof}

\begin{remark}
We emphasize that Lemma~\ref{lemma:noncentralpdfbounds} requires the condition $k > 1$ on the degrees of freedom. This condition arises from the requirement $\nu > -\frac{1}{2}$ imposed by inequality~\eqref{ineq-luke} and, consequently, by Lemma~\ref{lemma:bessel_bounds}, which concern the modified Bessel function of the first kind. In the PDF~\eqref{pdf-noncentral} of the noncentral chi-squared distribution, this function appears with parameter $\nu = \frac{k}{2} - 1$. Therefore, the condition $k > 1$ ensures that $\nu > -\frac{1}{2}$ holds.
\end{remark}

\subsection{Convergence of improper integrals involving PDF of noncentral chi-squared distribution}
\label{app:integrals}

In this appendix, we present, for reference, several formulas and convergence results for improper integrals that arise in the proofs in Section~\ref{sec:conv-shi-sq}.

It is well known (see, e.g., \cite[formula 4.358.5]{Gradshteyn}) that for all $\nu > 0$ and $\mu > 0$,
\begin{equation}\label{gammaintderiv}
\int_0^\infty x^{\nu - 1} e^{-\mu x} \log x \, dx
= \mu^{-\nu} \Gamma(\nu) \left( \psi(\nu) - \log \mu \right),
\end{equation}
where $\psi(\nu)$ denotes the digamma function.

\begin{lemma}\label{l:intlog}
The integral
\[
\int_0^\infty x^{\nu - 1} e^{-\mu x} |\log x|\, dx
\]
converges if and only if $\nu > 0$ and $\mu > 0$.
\end{lemma}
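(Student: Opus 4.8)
The plan is to split the integral at the point $x = 1$, where the factor $|\log x|$ vanishes and the integrand is continuous and strictly nonnegative throughout $(0,\infty)$. Writing $I_0 = \int_0^1$ and $I_\infty = \int_1^\infty$, the nonnegativity rules out any cancellation, so the full integral converges if and only if both $I_0$ and $I_\infty$ converge, and I would analyze the two endpoints independently.

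For the behavior near $x = 0$, the key observation is that on the compact interval $[0,1]$ the factor $e^{-\mu x}$ is trapped between the positive constants $e^{-|\mu|}$ and $e^{|\mu|}$, regardless of the sign of $\mu$. Hence $I_0$ converges if and only if $\int_0^1 x^{\nu-1}(-\log x)\, dx$ does. The substitution $x = e^{-t}$ transforms this into $\int_0^\infty t\, e^{-\nu t}\, dt$, which equals $\nu^{-2}$ when $\nu > 0$ and diverges when $\nu \le 0$. This shows $I_0 < \infty$ if and only if $\nu > 0$, independently of $\mu$.

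For the behavior near $x = \infty$, where $|\log x| = \log x$, the analysis branches on the sign of $\mu$. When $\mu > 0$, the elementary bound $\log x \le x$ produces the majorant $x^{\nu} e^{-\mu x}$, whose integral over $[1,\infty)$ is a convergent gamma-type integral, so $I_\infty < \infty$ for every $\nu$. When $\mu < 0$, the integrand behaves like $x^{\nu-1} e^{|\mu| x}\log x \to \infty$, so $I_\infty = \infty$. When $\mu = 0$, the substitution $x = e^{t}$ reduces $I_\infty$ to $\int_0^\infty t\, e^{\nu t}\, dt$, which is finite precisely when $\nu < 0$.

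Combining the two endpoint computations gives the equivalence: convergence forces $\nu > 0$ from $I_0$, and, granted $\nu > 0$, it forces $\mu > 0$ since any $\mu \le 0$ makes $I_\infty$ diverge once $\nu > 0$; conversely the pair $\nu > 0$, $\mu > 0$ makes both pieces finite. I do not anticipate any genuine obstacle here, as the argument rests only on two changes of variable and a crude comparison $\log x \le x$. The single point deserving care is the borderline case $\mu = 0$, where $I_\infty$ converges exactly in the regime $\nu < 0$, which is incompatible with the condition $\nu > 0$ required for $I_0$; this is precisely what rules out $\mu = 0$ and pins down the strict inequality $\mu > 0$ in the statement.
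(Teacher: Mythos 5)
Your proof is correct and takes essentially the same approach as the paper's: both split the integral at $x=1$ and determine convergence endpoint by endpoint, with $\nu>0$ forced by the behavior near $0$ and $\mu>0$ forced by the behavior at infinity. Your write-up is somewhat more self-contained (explicit substitutions and the bound $\log x \le x$ in place of the paper's appeal to the closed-form integral \eqref{gammaintderiv}) and slightly more careful at infinity, where you distinguish $\mu<0$ from $\mu=0$ while the paper describes the integrand for all $\mu\le 0$ as behaving like $x^{\nu-1}\log x$, which is literally accurate only for $\mu=0$; but these are refinements of the same argument, not a different route.
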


\begin{proof}
We split the integral as follows:
\[
\int_0^\infty x^{\nu - 1} e^{-\mu x} \abs{\log x} \, dx 
= -\int_0^1 x^{\nu - 1} e^{-\mu x} \log x \, dx
+ \int_1^\infty x^{\nu - 1} e^{-\mu x} \log x \, dx.
\]
Each term is finite when $\nu > 0$ and $\mu > 0$, as follows from~\eqref{gammaintderiv}.

As $x \downarrow 0$, the integrand behaves like $x^{\nu - 1} \log x$, which is integrable near zero if and only if $\nu > 0$. For $x > 1$, the exponential decay $e^{-\mu x}$ ensures integrability for any $\nu \in \mathbb{R}$ provided that $\mu > 0$.
If $\mu \le 0$, then as $x \to \infty$, the integrand behaves like $x^{\nu - 1} \log x$, and the integral diverges for all $\nu > 0$. This shows that both conditions are necessary.
\end{proof}

\begin{lemma}\label{l:integrals}
Let $\alpha > 0$, $k > 1$, and $\lambda > 0$, and let $f_{X_k}$ and $f_{X_{k,\lambda}}$ denote the PDFs of the central and noncentral chi-squared distributions, as defined in \eqref{pdf-central} and \eqref{pdf-noncentral}, respectively. Then:
\begin{enumerate}[$(i)$]
\item
The integral
$\int_{\mathbb{R}} f_{X_{k,\lambda}}^\alpha(x) \, dx$
converges if and only if
\begin{equation}\label{condition}
k > 2 - \frac{2}{\alpha}.
\end{equation}
Moreover, under condition \eqref{condition}, the following convergence holds:
\begin{equation}
\lim_{\lambda \to 0} \int_0^{\infty} f_{X_{k,\lambda}}^\alpha(x) \, dx = \int_0^{\infty} f_{X_k}^\alpha(x) \, dx.
\label{convergenceOfIntegralOfPower}
\end{equation}

\item
Under the same condition \eqref{condition}, the integral
$\int_{\mathbb{R}} f_{X_{k,\lambda}}^\alpha(x) \log f_{X_{k,\lambda}}(x) \, dx$
also converges, and
\begin{equation}
\lim_{\lambda \to 0} \int_0^{\infty} f_{X_{k,\lambda}}^\alpha(x)\log f_{X_{k,\lambda}}(x) \, dx = 
\int_0^{\infty} f_{X_k}^\alpha(x)\log f_{X_k}(x) \, dx.
\label{convergenceOfIntegralOfPowerTimesLogOfPDF}
\end{equation}
\end{enumerate}
\end{lemma}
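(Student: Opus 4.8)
The plan is to use the two-sided bounds of Lemma~\ref{lemma:noncentralpdfbounds} to replace the awkward Bessel factor in $f_{X_{k,\lambda}}$ by elementary expressions, thereby sandwiching both integrands between $\lambda$-independent functions of the form $x^{\alpha(k/2-1)}e^{-cx}$ (possibly multiplied by $\abs{\log x}$ or by a power of $x$). Combined with the pointwise convergence $f_{X_{k,\lambda}}(x)\to f_{X_k}(x)$ from \eqref{chi_squared_convergence}, this reduces both claims to the dominated convergence theorem, once the integrability threshold at the origin has been located.

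For part $(i)$ I would raise the inequalities of Lemma~\ref{lemma:noncentralpdfbounds} to the power $\alpha>0$, obtaining
\[
c_1\,e^{-\alpha(x+\lambda)/2}\,x^{\alpha(k/2-1)}\le f_{X_{k,\lambda}}^\alpha(x)\le c_2\,e^{-\alpha x/4+\alpha\lambda/2}\,x^{\alpha(k/2-1)},
\]
with $c_1,c_2>0$ depending only on $k$ and $\alpha$. Near infinity the exponential decay makes both sides integrable regardless of the exponent; near the origin the factor $x^{\alpha(k/2-1)}$ is integrable if and only if $\alpha(k/2-1)>-1$, i.e.\ precisely when $k>2-2/\alpha$. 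The upper bound then yields convergence under \eqref{condition}, while the lower bound (whose exponential is bounded below on $(0,1)$) forces divergence when \eqref{condition} fails, giving the ``if and only if''. For the limit \eqref{convergenceOfIntegralOfPower}, fix any $\lambda_0>0$; for $0<\lambda\le\lambda_0$ the upper bound produces the $\lambda$-free integrable majorant $c_2\,e^{\alpha\lambda_0/2}e^{-\alpha x/4}x^{\alpha(k/2-1)}$, and dominated convergence applies.

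For part $(ii)$ I would first take logarithms in Lemma~\ref{lemma:noncentralpdfbounds}, which bounds $\log f_{X_{k,\lambda}}(x)$ from both sides by expressions of the form $\text{const}+(\text{affine function of }x)+(k/2-1)\log x$; hence, uniformly in $0<\lambda\le\lambda_0$,
\[
\bigl|\log f_{X_{k,\lambda}}(x)\bigr|\le C_1+C_2\,x+C_3\,\abs{\log x}.
\]
Multiplying by the upper bound for $f_{X_{k,\lambda}}^\alpha(x)$ from part $(i)$ gives a $\lambda$-independent majorant for $\bigl|f_{X_{k,\lambda}}^\alpha(x)\log f_{X_{k,\lambda}}(x)\bigr|$ that is a finite combination of terms $x^{\alpha(k/2-1)+j}e^{-\alpha x/4}$ with $j=0,1$ and $x^{\alpha(k/2-1)}e^{-\alpha x/4}\abs{\log x}$. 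By formula \eqref{gammaintderiv} and Lemma~\ref{l:intlog}, each such term is integrable exactly when $\alpha(k/2-1)>-1$, i.e.\ under \eqref{condition}, the $\abs{\log x}$ factor not shifting this threshold. This proves finiteness of the integral for every $\lambda$ and supplies the majorant, so dominated convergence together with the pointwise limit $f_{X_{k,\lambda}}^\alpha\log f_{X_{k,\lambda}}\to f_{X_k}^\alpha\log f_{X_k}$ (again from \eqref{chi_squared_convergence} and continuity of $t\mapsto t^\alpha\log t$) yields \eqref{convergenceOfIntegralOfPowerTimesLogOfPDF}.

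The main obstacle is controlling the logarithmic integrand near the origin in part $(ii)$: since $\log f_{X_{k,\lambda}}$ changes sign and blows up at $0$ through the $\log x$ term while $f_{X_{k,\lambda}}^\alpha$ simultaneously degenerates there, no monotonicity is available and one must produce a genuine two-sided bound rather than argue on the integrand directly. The crucial point is verifying that inserting the $\abs{\log x}$ factor leaves the integrability threshold at the origin unchanged, so that the same condition \eqref{condition} governs part $(ii)$ as part $(i)$; Lemma~\ref{l:intlog} is precisely what makes this step clean.
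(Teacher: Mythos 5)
Your proof is correct and takes essentially the same route as the paper's: the two-sided elementary bounds of Lemma~\ref{lemma:noncentralpdfbounds} reduce both integrands to gamma-type expressions (with Lemma~\ref{l:intlog} handling the $\abs{\log x}$ term and giving the same threshold $\alpha(k/2-1)>-1$), and a $\lambda$-uniform majorant then yields both limits by dominated convergence. The only cosmetic differences are that the paper fixes the majorant at $\lambda=1$ instead of a general $\lambda_0$, and obtains your bound on $\abs{\log f_{X_{k,\lambda}}}$ via the explicit inequality $\abs{\log b}\le\abs{\log a}+\abs{\log c}$ for $a\le b\le c$.
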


\begin{proof}
$(i)$
Consider the integral
$\int_{\mathbb{R}} f_{X_{k,\lambda}}^\alpha(x) \, dx$.
Using Lemma~\ref{lemma:noncentralpdfbounds}, raising the bounding expressions to the power $\alpha$, and integrating, we obtain:
\begin{equation} \label{integrableupperboundforpower1}
\left(\frac{1}{2^{k/2}} e^{-(x + \lambda)/2} x^{k/2 - 1} \frac{1}{\Gamma(k/2)}\right)^\alpha
\leq f_{X_{k,\lambda}}^\alpha(x)
\leq \left(\frac{1}{2^{k/2}} e^{-x/4 + \lambda/2} x^{k/2 - 1} \frac{1}{\Gamma(k/2)}\right)^\alpha.
\end{equation}
Hence,
\begin{multline} \label{integrableupperboundforpower2}
\int_0^{\infty} \left(\frac{1}{2^{k/2}} e^{-(x + \lambda)/2} x^{k/2 - 1} \frac{1}{\Gamma(k/2)}\right)^\alpha dx
\leq \int_0^{\infty} f_{X_{k,\lambda}}^\alpha(x) \, dx \\
\leq \int_0^{\infty} \left(\frac{1}{2^{k/2}} e^{-x/4 + \lambda/2} x^{k/2 - 1} \frac{1}{\Gamma(k/2)}\right)^\alpha dx.
\end{multline}

Next, apply the substitutions $u = \frac{\alpha x}{2}$ and $v = \frac{\alpha x}{4}$ to evaluate the bounding integrals. Define
\begin{gather*}
C_1 = \frac{2}{\alpha} \left(\frac{e^{-\lambda /2} (2/\alpha)^{k/2 - 1}}{2^{k/2} \Gamma(k/2)}\right)^\alpha
= \frac{2}{\alpha} \left(\frac{e^{-\lambda /2}}{2 \alpha^{k/2 - 1} \Gamma(k/2)}\right)^\alpha,
\\
C_2 = \frac{4}{\alpha} \left(\frac{e^{\lambda /2} (4/\alpha)^{k/2 - 1}}{2^{k/2} \Gamma(k/2)}\right)^\alpha
= \frac{4}{\alpha} \left(\frac{e^{\lambda /2}\, 2^{k/2 - 2}}{\alpha^{k/2 - 1}\Gamma(k/2)}\right)^\alpha,
\end{gather*}
which are independent of $x$. Then the following two-sided bounds hold:
\[
C_1 \int_0^{\infty} u^{\alpha(k/2 - 1)} e^{-u} \, du
\leq \int_0^{\infty} f_{X_{k,\lambda}}^\alpha(x) \, dx
\leq C_2 \int_0^{\infty} v^{\alpha(k/2 - 1)} e^{-v} \, dv.
\]
These are gamma integrals and converge if and only if $\alpha(k/2 - 1) > -1$, i.e., $k > 2 - \frac{2}{\alpha}$.

To prove \eqref{convergenceOfIntegralOfPower}, note that pointwise convergence of the integrand follows from \eqref{chi_squared_convergence}. Furthermore, the bounds in \eqref{integrableupperboundforpower1}–\eqref{integrableupperboundforpower2} provide an integrable upper bound independent of $\lambda$ for $\lambda < 1$, since the upper bound is increasing in $\lambda$. Evaluating at $\lambda = 1$ yields a uniform integrable majorant. Therefore, the dominated convergence theorem implies \eqref{convergenceOfIntegralOfPower}.

\medskip

$(ii)$
Consider the integral
\[
\int_0^{\infty} f_{X_{k, \lambda}}^\alpha(x)\log f_{X_{k, \lambda}}(x)\, dx.
\]
We again use the upper bound \eqref{integrableupperboundforpower1} for $f_{X_{k,\lambda}}^\alpha(x)$.

To estimate the logarithmic term, we apply the inequality: for $a \le b \le c$, it holds that $|\log b| \le |\log a| + |\log c|$. Applying this to the upper and lower bounds of $f_{X_{k,\lambda}}$ gives
\begin{align}
\MoveEqLeft
\left|\log \left( \frac{1}{2} e^{-(x + \lambda)/2} \left(\frac{x}{\lambda}\right)^{k/4 - 1/2} I_{k/2 - 1}(\sqrt{\lambda x}) \right)\right| 
\notag\\
&\leq \left| -\frac{x}{2} - \frac{\lambda}{2} - \frac{k}{2} \log 2 + \left( \frac{k}{2} - 1 \right) \log x - \log \Gamma(k/2) \right| \notag \\
&\quad + \left| -\frac{x}{4} + \frac{\lambda}{2} - \frac{k}{2} \log 2 + \left( \frac{k}{2} - 1 \right) \log x - \log \Gamma(k/2) \right| \notag \\
&\leq \frac{3}{4} x + \lambda +  k\log 2 + |(k - 2)\log x| + |\log \Gamma(k/2)|. \label{logupperbound}
\end{align}

Combining \eqref{integrableupperboundforpower1} and \eqref{logupperbound}, we obtain:
\begin{align}
\MoveEqLeft[1]
\int_0^{\infty} \abs{f_{X_{k, \lambda}}^\alpha(x)\log f_{X_{k, \lambda}}(x)} \, dx  
\notag\\
&\leq \int_0^{\infty} \left(\frac{1}{2^{k/2}} e^{-\frac{x}{4} + \lambda/2} x^{k/2-1} \frac{1}{\Gamma(k/2)}\right)^\alpha \left(\frac{3}{4} x + \lambda +  k\log 2 + |(k - 2)\log x| + |\log \Gamma(k/2)|\right) dx
\notag\\
&= C_3 \int_0^{\infty} e^{-\frac{\alpha x}{4}} x^{\alpha(k/2-1)} \left(\frac{3}{4} x + \lambda +  k\log 2 + |(k - 2)\log x| + |\log \Gamma(k/2)|\right) dx
\notag\\
&= \frac{3}{4} C_3 \int_0^{\infty} e^{-\frac{\alpha x}{4}} x^{\alpha(k/2-1)+1} dx
+ C_3 \left(\lambda +  k\log 2 + |\log \Gamma(k/2)|\right)\int_0^{\infty} e^{-\frac{\alpha x}{4}} x^{\alpha(k/2-1)} dx
\notag\\
&\quad + C_3 |k - 2|\int_0^{\infty} e^{-\frac{\alpha x}{4}} x^{\alpha(k/2-1)} |\log x| dx,
\label{logintegrand}
\end{align}
where
$C_3 = \left(\frac{e^{\lambda/2}}{2^{k/2}\Gamma(k/2)}\right)^\alpha$.
The first two integrals on the right-hand side of \eqref{logintegrand} are of gamma type and finite under condition \eqref{condition}. According to Lemma~\ref{l:intlog}, the third integral is also finite under this condition.

To establish \eqref{convergenceOfIntegralOfPowerTimesLogOfPDF}, note that the integrand converges pointwise:
\[
\lim_{\lambda \to 0} f_{X_{k,\lambda}}^\alpha(x)\log f_{X_{k,\lambda}}(x) = f_{X_k}^\alpha(x)\log f_{X_k}(x),
\]
as follows from \eqref{chi_squared_convergence}. Furthermore, the integrable upper bound in \eqref{logintegrand} is valid for all $\lambda < 1$, and grows monotonically with $\lambda$. Setting $\lambda = 1$ yields a uniform majorant. Applying the dominated convergence theorem completes the proof.
\end{proof}

\begin{remark}
Let $\alpha > 0$, and let $f_{X_{k,\lambda}}$ and $f_{X_{k,\lambda_0}}$ denote the probability density functions of noncentral chi-squared distributions with $k > 1$ degrees of freedom and noncentrality parameters $\lambda > 0$ and $\lambda_0 > 0$, respectively. Then, under condition~\eqref{condition}, the dominated convergence theorem, applied similarly to the proof of Lemma~\ref{l:integrals}, yields the following limits:
\begin{gather*}
\lim_{\lambda \to \lambda_0} \int_0^{\infty} f_{X_{k,\lambda}}^\alpha(x) \, dx = \int_0^{\infty} f_{X_{k,\lambda_0}}^\alpha(x) \, dx, \\
\lim_{\lambda \to \lambda_0} \int_0^{\infty} f_{X_{k,\lambda}}^\alpha(x)\log f_{X_{k,\lambda}}(x) \, dx = 
\int_0^{\infty} f_{X_{k,\lambda_0}}^\alpha(x)\log f_{X_{k,\lambda_0}}(x) \, dx.
\end{gather*}
\end{remark}

\providecommand{\bysame}{\leavevmode\hbox to3em{\hrulefill}\thinspace}
\providecommand{\MR}{\relax\ifhmode\unskip\space\fi MR }
\providecommand{\MRhref}[2]{%
  \href{http://www.ams.org/mathscinet-getitem?mr=#1}{#2}
}
\providecommand{\href}[2]{#2}

\end{document}